\numberwithin{equation}{section}
\newtheorem{theorem}{Theorem}[section]
\newtheorem{lemma}[theorem]{Lemma}
\newtheorem{definition}[theorem]{Definition} 
\newtheorem{corollary}[theorem]{Corollary}
\newtheorem{assumption}[theorem]{Assumption}
\newtheorem{problem}[theorem]{Open Problem}
\theoremstyle{remark}
\newtheorem{remark}[theorem]{Remark}
\newcommand{\mat}[1]{\begin{bmatrix} #1 \end{bmatrix}}
\newcommand{\bke}[1]{\left( #1 \right)}
\newcommand{\bket}[1]{\left\{ #1 \right\}}
\newcommand{\norm}[1]{\| #1 \|}
\newcommand{\bka}[1]{\left\langle #1 \right\rangle}
\newcommand{\al}{\alpha}
\newcommand{\de}{\delta}
\newcommand{\e}{\epsilon}
\newcommand{\ga}{{\gamma}}
\newcommand{\la}{\lambda}
\newcommand{\Om}{{\Omega}}
\newcommand{\om}{{\omega}}
\newcommand{\si}{\sigma}
\renewcommand{\th}{\theta}
\renewcommand{\Re}{\mathop{\mathrm{Re}}}
\newcommand{\R}{{\mathbb R}}\newcommand{\RR}{{\mathbb R }}
\newcommand{\N}{{\mathbb N}}
\newcommand{\ZZ}{{\mathbb Z}}
\newcommand{\cR}{{\mathcal R}}
\newcommand{\pd}{{\partial}}
\newcommand{\nb}{{\nabla}}
\newcommand{\lec}{\lesssim}
\newcommand{\I}{\infty}
\renewcommand{\div}{\mathop{\mathrm{div}}\nolimits}
\def\si{\sigma}
\newcommand{\donothing}[1]{{}}
\newcommand{\EQ}[1]{\begin{equation}\begin{split} #1 \end{split}\end{equation}}
\newcommand{\xRightarrow}[2][]{\ext@arrow 0359\Rightarrowfill@{#1}{#2}}
\newcommand{\phase}{\phi}
\begin{document} \title{Rotationally corrected scaling invariant solutions to the Navier-Stokes equations}
\author{Zachary Bradshaw and Tai-Peng Tsai}
\date{}
\maketitle

\begin{abstract}
We introduce new classes of solutions to the three dimensional Navier-Stokes equations in the whole and half spaces that add rotational correction to self-similar and discretely self-similar solutions.  We construct forward solutions in these new classes for arbitrarily large initial data in $L^3_w$ on the whole and half spaces.  We also comment on the backward case.

\end{abstract}

\nocite{*}

\section{Introduction}
The 3D Navier-Stokes equations in a domain $\Omega \subset \R^3$ read
\begin{align}
\begin{array}{ll}\label{eq:NSE}
 \partial_t v -\Delta v +v\cdot\nabla v+\nabla \pi  = 0%
\\  \nabla\cdot v = 0%
\end{array}
\quad \mbox{~in~}\Omega\times (0,\infty),
\end{align}
and are supplemented with the initial condition 
\[
v|_{t=0}=v_0,
\]
where $v_0:\Omega\to \R^3$ is given and satisfies $\nabla\cdot v_0 = 0$.
If $\Omega$ possesses a boundary $\partial \Omega$ with outernormal $\nu$, then we require
\[
v|_{\partial \Omega}=0,\qquad {v_0}\cdot \nu|_{\partial \Omega}=0.
\]
In this paper, $\Om$ is either the whole space $\R^3$ or the half space $\R^3_+= \{ x=(x_1,x_2,x_3)\in \R^3, x_3>0\}$.

Solutions to \eqref{eq:NSE} satisfy a natural scaling: given a solution $v$ and $\lambda>0$, it follows that 
\begin{equation}
\label{1.2}
	v^{\lambda}(x,t)=\lambda v(\lambda x,\lambda^2 t),
\end{equation}
is also a solution with associated pressure 
\begin{equation}
	\pi^{\lambda}(x,t)=\lambda^2 \pi(\lambda x,\lambda^2 t),
\end{equation}
and initial data 
\begin{equation}
v_0^{\lambda}(x)=\lambda v_0(\lambda x).
\end{equation}

Leray introduced self-similar solutions to \eqref{eq:NSE} in \cite{leray}.
A solution is \emph{self-similar} (SS) if 
\begin{equation}
\label{eq1.5}
v^\lambda(x,t)=v(x,t)
\end{equation}
for every $\lambda>0$.
The solutions considered by Leray are called \emph{backward} since they are defined for $-\infty<t<0$.  We will consider \emph{forward} solutions defined for $0<t<\infty$.
If the scale invariance \eqref{eq1.5} holds for a particular $\lambda>1$, not necessarily for every $\lambda>1$,
then we say $v$ is \emph{discretely self-similar} with factor $\lambda$, i.e.~$v$ is $\lambda$-DSS. The initial data $v_0$ is SS or $\lambda$-DSS if the appropriate scaling invariance holds with the time variable omitted.

The existence for SS/DSS solutions for \emph{small} initial data follows from
the unique existence theory of \emph{mild solutions} in various scaling invariant functional spaces, see \cite{GiMi,Kato,Barraza,CP,Koch-Tataru}.
The theory for large data is more 
recent: Jia and Sverak established the first large data existence result in \cite{JiaSverak} for self-similar data which is H\"older continuous on $\R^3\setminus 0$.   It is based on a priori H\"older estimates near initial time for \emph{local Leray solutions}  introduced by Lemari\'e-Rieusset in \cite{LR} (see also \cite{KiSe}), and is extended by Tsai \cite{Tsai-DSSI} to construct $\la$-DSS solutions under the assumption that $\la$ is close to $1$, or if the data is axisymmetric with no swirl.
A second construction is obtained by Korobkov and Tsai \cite{KT-SSHS}, which is valid in the half space as well as the
whole space, and is based on the a priori $H^1$ estimate obtained by Leray's method of contradiction and the triviality of $H^1_0$-solutions of Euler equations. 
Note that the first construction \cite{JiaSverak,Tsai-DSSI} does not work in the half space, while the second construction \cite{KT-SSHS} does not work for DSS solutions. A third construction of the authors \cite{BT1} constructs SS and $\la$-DSS solutions for any data in $L^3_w$ (i.e., weak $L^3$ defined in \eqref{weakL3}) and, in the DSS case, any $\la>1$. It is based on a new a priori energy estimate, particular to the associated Leray equations to be introduced in \eqref{eq:Leray} (not available to Navier-Stokes equations \eqref{eq:NSE}), and is a weak solution theory using the Galerkin approximation, not the Leray-Schauder theorem used in \cite{JiaSverak,Tsai-DSSI,KT-SSHS}. Although it is stated only for the whole space in 
\cite{BT1}, its method works also for the half space, as will be made apparent as a special case of this article.

The purpose of this article is to introduce and investigate a new class of solutions with scaling properties resembling those of SS or DSS solutions \emph{modulo rotational corrections}. There is a rich literature on fluids surrounding \emph{rotating obstacles}, see the survey \cite{Hishida}. For ease of notation,
we will only consider rotations around the $x_3$-axis with matrices
\[
R_s=R( s) = \mat{ 
\cos (s) & -\sin (s) & 0 \\
\sin (s) & \cos ( s) & 0 \\
0 & 0 & 1 }.\]
Note $R( s)R(\tau)=R(\tau)R( s)$ for any $s,\tau \in \R$, and
\[
\frac d{ds} R(s) = JR(s) = R(s)J, \quad 
J= \mat{ 
0 & -1 & 0 \\
1&0 & 0 \\
0 & 0 & 0 }.\]

A vector field $v(x,t)$ is said to be \emph{rotated self-similar} (RSS) if, for some fixed $\alpha\in \R$ and for \emph{all} $\la>0$,
\begin{equation}
\label{v-RSS}
v(x,t) = \la R(- 2\alpha \log \la)\, v\! \bke{\la R(2 \alpha \log \la) x, \la^2 t}, \quad \forall x,\forall t, \forall \la.
\end{equation}
The constant $\al$ will be called the \emph{angular speed}, relative to the new time variable $s$ to be defined in \eqref{variables}.
An RSS vector field is always DSS with any factor $\la>1 $ such that $2 \alpha \log \la \in 2\pi \mathbb{Z}$.
When $\alpha=0$ it becomes SS. The choice $\th(\la)=2\alpha \log \la $ in the argument of $R(\cdot)$ is natural because $\la>0$ is arbitrary and hence we need
\begin{equation}
\th(\la)+ \th(\mu) = \th(\la \mu),\quad \forall \la, \mu>0.
\end{equation}
Setting $\la=t^{-1/2}$, the RSS vector field $v$ satisfies
\EQ{
v(x,t) = R(\al \log {t}) \frac 1{\sqrt{t}}\, v\!\bke{R(-\al \log {t}) \frac x{\sqrt{t}} , 1}, \quad \forall x, \forall t.
} 
Thus the value of $v$ is determined by its value at any fixed time, and given any profile at a fixed time we can construct an RSS vector field.

A vector field $v(x,t)$ is said to be \emph{rotated discretely self-similar} (RDSS) if, for some $\la>1$ (not necessarily all $\la>1$) 
and some $\phase \in \R$, 
\begin{equation}
\label{v-RDSS}
v(x,t) = \la R(-\phase)\, v\! \bke{\la R(\phase) x, \la^2 t}, \quad \forall x,\forall t.
\end{equation}
We call $\la$ the \emph{factor} and $\phase$ the \emph{phase}.
When $\phase\in 2\pi \mathbb{Z}$ we recover $\la$-DSS vector fields. 
If $n\phase=2\pi m$ for some integers $n>0$ and $m$, then $v$ is DSS with factor $\la^n$. If $\frac\phase{2\pi}$ is irrational, in general $v$ is not DSS. 
For any $t>0$ let $\tau(t)\in [1,\lambda^2)$ satisfy $\tau=\lambda^{2k}t$ for some $k\in \ZZ$.  Then,
\[
v(x,t)= \la^k R(-k\phase)\, v\! \bke{\la^k R(k \phase) x, \tau},
\]
i.e., $v$ is decided entirely by its values on $t\in [1,\lambda^2)$.
{Note that an RSS vector field with angular speed $\al$ is always RDSS for any factor $\la>1$ with phase $\phase=2\al \log \la $.}

In summary, the inclusions between these classes are
\[
\mbox{SS}\subsetneq \mbox{RSS} \subsetneq \mbox{DSS} \subsetneq \mbox{RDSS}.
\]
 
Similar to SS/DSS vector fields,
these vector fields are also called \emph{forward} if they are defined for $0<t<\infty$, or  \emph{backward} if they are defined for $-\infty<t<0$. They are called 
\emph{stationary} if they are time-independent.

A vector field
$v_0(x):\R^3\to \R^3$ is RSS if for some $\alpha\in \R$,
\begin{equation}
\label{v0-RSS}
v_0(x) = \la R(- 2 \alpha \log \la) v_0\bke{\la R( 2\alpha \log \la) x}, \quad \forall x, \forall \la,
\end{equation}
and is
RDSS if for some $\la>1$ and some $\phase \in \R$, 
\begin{equation}
\label{v0-RDSS}
v_0(x) = \la R(-\phase) v_0\bke{\la R(\phase) x}, \quad \forall x.
\end{equation}
Setting $\la=|x|^{-1}$, an RSS $v_0$ satisfies
\begin{equation}
\label{v0-RSS2}
v_0(x)=\frac 1 {|x|} R(2\alpha \log  {|x|}) \,v_0\!\bke{ R(-2\alpha \log  {|x|}) \frac x{|x|}}.
\end{equation}
Thus the value of $v_0$ is determined by its values on the unit sphere. Similarly, if $v_0$ is RDSS, then it is determined by its values on $\{ x: 1\leq |x|<\la \}$.
Clearly, if $\lim _{t \searrow 0} v(x,t)  = v_0(x)$ and $v(x,t) $ satisfies \eqref{v-RSS} (resp.~\eqref{v-RDSS}), then $v_0(x)$ satisfies \eqref{v0-RSS} (resp.~\eqref{v0-RDSS}).

The ansatz of RSS solutions was originally proposed by Grisha Perelman for backward solutions defined for $-\infty<t<0$  to Seregin around a decade ago (private communication of G.~Seregin). See the Appendix for details.   We are not aware of any previous study of RDSS solutions.

Our goal in this paper is to construct RSS/RDSS solutions for general RSS/RDSS initial data.
One needs to verify that there is an abundance of nontrivial such $v_0$.
If $v_0$ is axisymmetric (i.e., $v_0(x) = R(-s) v_0(R(s)x)$ for any $s$), then RSS is reduced to SS, and RDSS  is reduced to DSS. 
It is relatively easy to construct non-axisymmetric RDSS vector fields, in the same way as the DSS case: One can choose any divergence free  vector field with compact support in the annulus $B_\la \setminus \bar B_1$ (or its intersection with $\R^3_+$), and extend its definition to entire $\R^3$ (or $\R^3_+$) by RDSS property \eqref{v0-RDSS}. To construct non-axisymmetric RSS vector fields, we use 
the spherical
coordinates $\rho,\phi,\th$ with basis vectors
\begin{equation}
\label{spherical}
e_\rho =\frac {x}{\rho}, \quad
e_\phi = \left(\frac {x_1x_3}{r\rho},\frac {x_2x_3}{r\rho},-\frac r\rho \right),
\quad
e_\th = \left(-\frac {x_2}r,\frac {x_1}r,0\right),
\end{equation}
where $\rho=|x|$ and $r=\sqrt{x_1^2+x_2^2}$,
and consider vector fields of the form at $\rho=1$:
\begin{equation}
v_0(1, \phi,\th)= \frac {f(\phi,\th)}{\sin \phi} e_\rho + \frac {g(\phi,\th)}{
  \sin \phi}e_\phi + {h(\phi,\th)} e_\th .
\end{equation}
The RSS condition  \eqref{v0-RSS} gives
\begin{equation}
v_0 (\rho,\phi,\th)= \frac {f(\phi,\th_\rho)}{\rho \sin \phi} e_\rho + \frac {g(\phi,\th_\rho)}{\rho
  \sin \phi}e_\phi + \frac {h(\phi,\th_\rho)}{\rho} e_\th ,
\end{equation}
where $\th_\rho = \th-\al \ln \rho$.
We impose that  $f, g, h$ are $2\pi$-periodic in $\th$ and vanish sufficient order at $\phi=0,\pi$ or at $\phi=0,\pi/2$.
The divergence-free condition $\div v_0=0$ becomes 
\begin{equation}
(1-\al \pd_\th) f + \pd_\phi g + \pd_\th h = 0.
\end{equation}
To get nontrivial dependence on $\th$, we may impose the
$k$-\emph{equivariance} ansatz for $k \in \N$:
\[
f=\Re F(\phi) e^{ik\th}, \quad
g=\Re G(\phi) e^{ik\th}, \quad
h=\Re H(\phi) e^{ik\th}, 
\]
and it suffices to choose complex-valued smooth functions $F,G,H$ of $\phi\in (0,\pi)$ (or
$\phi\in (0,\pi/2)$) that satisfy
\begin{equation}
(1-ik \al ) F + G' + ik H = 0
\end{equation}
and vanish sufficient order at $\phi=0,\pi$ (or at $\phi=0,\pi/2$).

These solutions can be better understood in similarity variables, introduced by Giga-Kohn \cite{Giga-Kohn}. Consider the similarity transform
\begin{equation}
\label{similarity transform}
v(x,t) = \frac 1{\sqrt{t}} V(z,s), \quad \pi(x,t) = \frac 1{{t}} \Pi(z,s), 
\end{equation}
where
\EQ{
z=\frac x{\sqrt{t}},\quad s=\log t.
\label{variables}
}
For a cone-like domain $\Om$, the system of Navier-Stokes equations \eqref{eq:NSE} in $\Om \times (0,\infty)$ is equivalent to the time dependent \emph{forward Leray equations}
\EQ{\label{eq:Leray}
 \partial_s V  - \frac 12 V - \frac z2\cdot\nabla_z V -\Delta_z V+V\cdot\nabla_z V+\nabla_z \Pi  = 0,
 \quad \nabla_z\cdot V = 0
}
in $\Om \times (-\infty,\infty)$. 
 When $\pd \Om$ is nonempty, the boundary condition $v|_{\pd \Om}=0$ corresponds to 
\EQ{\label{eq:Leray-BC}
V|_{\pd \Om}=0.
}
For backward solutions defined for $-\infty<t<0$, we replace $t$ by $-t$ in \eqref{similarity transform} and \eqref{variables}, and get the backward 
Leray equations
\EQ{\label{eq:Leray-}
 \partial_s V  + \frac 12 V + \frac z2\cdot\nabla_z V -\Delta_z V+V\cdot\nabla_z V+\nabla_z \Pi  = 0,
 \quad \nabla_z\cdot V = 0.
}
A SS solution $v(x,t)$ of \eqref{eq:NSE} corresponds to a stationary solution of \eqref{eq:Leray} or 
\eqref{eq:Leray-}. A DSS solution $v(x,t)$ of \eqref{eq:NSE} with factor $\la>1$ corresponds to an $s$-periodic solution $V(z,s)$ of \eqref{eq:Leray} or 
\eqref{eq:Leray-} with period $2\log \la$.
An RSS solution $v(x,t)$ of \eqref{eq:NSE} satisfying \eqref{v-RSS} with angular speed $\al$ corresponds to a solution of \eqref{eq:Leray} or 
\eqref{eq:Leray-} satisfying (with $\tau= 2\log \la$)
\EQ{\label{V-RSS}
V(z,s) = R(-\al \tau ) V (R(\al \tau ) z, s+\tau), \quad \forall  \tau\in \R.
}
Finally, an RDSS solution $v(x,t)$ of \eqref{eq:NSE} satisfying \eqref{v-RDSS} with factor $\la$ and phase $\phase$ corresponds to a solution of \eqref{eq:Leray} or 
\eqref{eq:Leray-} satisfying
\EQ{\label{V-RDSS}
V(z,s) = R(-\phase) V(R(\phase)z,s+2\log \la).
}
Note we get \eqref{V-RDSS} from \eqref{V-RSS} by choosing $\tau= 2\log \la$ and $\phase =\al \tau$.

The initial condition $v|_{t=0}=v_0$ does not have a clear meaning for $V$ in general, but corresponds to a ``boundary condition'' for $V(z,s)$ at spatial infinity for SS/DSS/RSS/RDSS solutions, see \cite{KT-SSHS,BT1} and \S \ref{sec.Leray}.

Leray \cite{leray} proposed the SS solution as a possible ansatz for singular solutions and gave
the stationary case of \eqref{eq:Leray-}. His original problem of existence of $V(z) \in W^{1,2}(\R^3) \subset L^3(\R^3)$  was excluded in Ne\v cas, {R\accent23 u\v {z}i\v {c}ka}, and {\v Sver\'ak} in \cite{NRS}. It was later extended to exclude $V \in L^q(\R^3)$, $3<q\le \infty$, or $v \in L^{10/3}(B_1 \times (-1,0))$, by \cite{Tsai-ARMA}.
Giga and Kohn \cite{Giga-Kohn} were aware of the correspondence between \eqref{eq:NSE} and \eqref{eq:Leray-} and used the corresponding similarity transform to study the singularity of nonlinear heat equations.

If we assume  
\begin{equation}
\label{V.u.eq}
V(z,s) = R_\theta u(y,s), \quad  \Pi(z,s) = p (y,s),\quad y = R_\theta^T z,
\end{equation}
with
\[
R_\theta = R(\th(s))
\]
for some function $\th(s)$, then \eqref{eq:Leray} is 
equivalent to %
\EQ{\label{eq:Leray-rot}
  \partial_s u  + \dot \th J u - \dot \th (J y)\cdot \nb u
  &-  \frac 12 u -  \frac y2\cdot\nabla u -\Delta_y u +u\cdot\nabla u+\nabla p  = 0,
 \\ & \nabla \cdot u = 0,
}
where $\nb = \nb_y$ and $\Delta = \Delta_y$. If $\Om=\R^3_+$, then $v|_{\pd \Om}=0$ implies 
\[
u|_{\pd \Om}=0.
\]
To illustrate these observations consider the case when $w= V-z=R q$, $q=u-y$, with $R=R_\theta$.  Then
\EQ{
(w \cdot \nb_z  V)_i &= R_{kl} q_l(y)  \pd_{z_k} R_{ij} u_j(R^T z) = R_{kl} q_l(y)R_{ij}  \pd_{y_m}  u_j(y) R_{km}. 
}
Since $R \in O(3)$,  $R_{kl}R_{km} = \de_{lm}$ and hence
\EQ{
(w \cdot \nb_z  V)_i &= \de_{lm} q_l(y)R_{ij}  \pd_{y_m}  u_j(y)   = R_{ij} q_l (y) \pd_{y_l}  u_j(y)
= [R (q \cdot \nb_y u)]_i.
}

If $v(x,t)$ is an RSS solution of \eqref{eq:NSE} satisfying \eqref{v-RSS}, then $V(z,s)$ satisfies  \eqref{V-RSS}, and hence $u(y,s)$ is a stationary solution of\eqref{eq:Leray-rot} with constant $\dot \theta = \alpha$.
On the other hand, for any RDSS solution $v(x,t)$ of \eqref{eq:NSE} satisfying \eqref{v-RDSS} with factor $\lambda>1$ and phase $\phase$, 
$V(z,s)$ satisfies  \eqref{V-RDSS}.
Let 
\begin{equation}
\label{alpha-choice}
T=2 \log \la,\quad
 \al_k= \frac {2k\pi + \phase}T ,
\end{equation}
for an arbitrary integer $k\in \mathbb{Z}$. Then $v(x,t)$ 
corresponds to a periodic solution $u(y,s)$ of \eqref{eq:Leray-rot} with constant $\dot \theta = \alpha_k$ and period $T$. To be definite we will take $\al=\al_0 = \frac \phase T$.

To construct solutions in Theorem \ref{thrm:existence}
using our method, the system \eqref{eq:Leray-rot} needs not be autonomous but needs to be periodic in $s$. However, we have not been able to find applications of non-constant $\dot \theta$, hence we will let $\dot \theta =\alpha$ be 
constant in the rest of the paper for simplicity of presentation.

The natural spaces to study $v_0$ and $v$ as described above are, respectively, $L^3_{w}(\RR^3)$ and $L^\infty((0,\infty);L^3_w(\RR^3))$. Recall that $f\in L^3_w(\Om)$ if and only if
$\|f\|_{L^3_w(\Om)}<\I$,
where 
\begin{equation}\label{weakL3}
\|f\|_{L^3_w(\Om)} = \sup_{s>0}\,s \,m(f,s)^{1/3},
\end{equation}
and $m(f,s)$ is the distribution function of $f$ given by
\[
m(f,s)=|\{ x\in \Omega: | 	f(x)|>s  \}|.
\]  
Let $L^3_{w,\si}(\Omega)$ be the subspace of $L^3_w(\Omega, \R^3)$ of divergence free vector fields which satisfy $v_0\cdot \nu|_{\partial\Om}=0$ if $\partial\Om$ is nonempty and has the unit outer normal vector field $\nu$.

Since $L^3_w(\RR^3)$ embeds continuously into the space of uniformly locally square integrable functions $L^2_{u\,loc}(\RR^3)$, one may construct global-in-time \emph{local Leray solutions} for our data as in  \cite{JiaSverak,Tsai-DSSI, BT1} in the whole space. However, because this paper aims to construct solutions on both the whole and half spaces, and
 there is presently no existence theory for {local Leray solutions} on the half space, we only construct weak solutions.

\begin{definition}[EP-solutions to \eqref{eq:NSE}]Let $\Omega$ be a domain in $\R^3$. The vector field $v$ defined on $\Omega\times (0,\I)$ is an \emph{energy perturbed solution to \eqref{eq:NSE}} -- i.e.~an \emph{EP-solution} -- with initial data $v_0\in L^3_{w,\si}(\Omega)$ if \begin{equation}\label{eq:NSEweak} 
\int_0^\I  \big( (v,\partial_s f)-(\nabla v,\nabla f)- v\cdot\nabla v ,f)  \big)  \,ds =0,
\end{equation}
 for all $f\in  \{ f\in C_0^\I(\Om\times \R_+):\nabla\cdot f = 0 \}$, if
\[
v-Sv_0 \in L^\infty(0,T;L^2(\Omega))\cap L^2(0,T;H^1(\Omega)),
\]
for any $T>0$, and if
\[
\lim_{t\to 0^+} \| v(t)-Sv_0(t) \|_{L^2(\Omega)} = 0,
\]
where $Sv_0(t)\in L^\I(0,\I;L^3_{w,\si}(\Omega))$ is the solution to the time-dependent Stokes system with initial data $v_0$, see \S\ref{sec.Stokes}.
\end{definition}

\begin{remark} The name ``energy perturbed solution'' means that the difference $v-Sv_0$ is in the energy class, although $Sv_0$ is not.
We do not mention the pressure in Definition \ref{def:periodicweaksolutionR3}. Note that a pressure can be constructed after the fact since $Sv_0$ has an associated pressure and $v-Sv_0\in L^2$ for all positive times. 
\end{remark}

\begin{theorem} \label{thrm:existence}
Assume $v_0$ is in $L^3_{w,\si}(\Omega)$ where $\Omega\in \{ \R^3,\R^3_+ \}$.  

(i) (RSS) If $v_0$ is RSS, satisfying \eqref{v0-RSS} for some angular speed $\alpha \in \R$, then there exists an EP-solution $v$ 
on $\Omega\times [0,\infty)$ with initial data $v_0$, which  is RSS and satisfies \eqref{v-RSS} for the same $\alpha$. It satisfies $v|_{\pd \Om}=0$ if $\Om= \R^3_+$.

(ii) (RDSS) If $v_0$ is RDSS, satisfying \eqref{v0-RDSS} for some factor $\la>1$ and phase $\phase \in \R$, 
then there exists an EP-solution $v$ on $\Omega\times [0,\infty)$ with initial data $v_0$,  which is RDSS and satisfies \eqref{v-RDSS} for the same $\la$ and $\phase$. It satisfies $v|_{\pd \Om}=0$ if $\Om= \R^3_+$.
\end{theorem}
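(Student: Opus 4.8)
The plan is to pass to the rotating similarity variables of \eqref{V.u.eq}--\eqref{eq:Leray-rot} with constant $\dot\th=\al$, and to reduce part (i) to constructing a \emph{stationary} solution $u(y,s)=u(y)$ of the rotating Leray system \eqref{eq:Leray-rot}, and part (ii) to constructing a \emph{$T$-periodic} one with $T=2\log\la$ and $\al=\phase/T$; in each case $u|_{\pd\Om}=0$ when $\Om=\R^3_+$, and $u$ must carry the correct profile at spatial infinity. Undoing the transforms via $V(z,s)=R(\al s)\,u(R(-\al s)z,s)$ and $v(x,t)=t^{-1/2}V(x/\sqrt t,\log t)$ then produces the desired $v$: using $R(\al T)=R(\phase)$ and commutativity of the $R(\cdot)$, $T$-periodicity of $u$ is equivalent to \eqref{V-RDSS}, hence to \eqref{v-RDSS}, and likewise a stationary $u$ gives \eqref{V-RSS} and \eqref{v-RSS}. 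Since $v_0\in L^3_w$ is only scaling-critical, $v$ itself cannot lie in the energy class, so, as in \cite{BT1}, we split $u=U_0+w$ where $U_0$ is a corrector built from the Stokes flow $Sv_0$ of $v_0$ transported into the rotating similarity variables, and we seek $w$ in the energy class. Because the Stokes system commutes with rotations about the $x_3$-axis and with the scaling \eqref{1.2}, $Sv_0$ inherits the RSS (resp.\ RDSS) symmetry of $v_0$, so $U_0$ is stationary (resp.\ $T$-periodic) and solves the linear part of \eqref{eq:Leray-rot}; moreover $U_0$ already realizes the spatial-infinity profile determined by $v_0$, while $w$ decays there automatically.

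The heart of the matter is the a priori energy estimate for the equation satisfied by $w$, i.e.\ \eqref{eq:Leray-rot} with $u=U_0+w$, whose right-hand side is $-U_0\cdot\nb U_0$ and which contains the extra linear terms $U_0\cdot\nb w$ and $w\cdot\nb U_0$. This is precisely the estimate that is the crux of \cite{BT1}, and it carries over once one checks that the two terms produced by the rotation are energy-neutral: testing with $w$, one has $\int_\Om\al(Jw)\cdot w\,\ind y=0$ since $J$ is antisymmetric, and $\int_\Om\al\bke{(Jy)\cdot\nb w}\cdot w\,\ind y=\frac\al2\int_\Om(Jy)\cdot\nb|w|^2\,\ind y=0$ because $\div(Jy)=0$ and $Jy\cdot\nu=0$ on $\pd\R^3_+$ (and $w|_{\pd\R^3_+}=0$ in any case). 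Thus the rotation does not degrade the estimate for \emph{any} $\al$, and the coercivity --- supplied, as in \cite{BT1} and unavailable to \eqref{eq:NSE}, by the dilation terms $-\frac12u-\frac y2\cdot\nb u$ --- still dominates the critical contributions. One obtains a bound on $w$ in $L^\infty_sL^2_y\cap L^2_sH^1_y$, uniform over one period in part (ii) and over all $s$ in part (i).

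Given this bound, the construction is by a Galerkin approximation in a divergence-free basis of $H^1_{0,\si}(\Om)$ respecting the boundary condition; in part (ii) one combines it with a fixed-point (Poincar\'e-map) argument for the finite-dimensional $T$-periodic ODE system, and in part (i) with the stationary Galerkin system. The a priori estimate applies uniformly to the approximations, and Aubin--Lions compactness lets one pass to the limit to obtain $w$, hence $u$; a pressure is recovered afterwards. Transforming back, $v(x,t)=t^{-1/2}R(\al\log t)\,u\bke{R(-\al\log t)x/\sqrt t,\log t}$ satisfies \eqref{eq:NSEweak} and the boundary condition on $\R^3_+$ because $u$ does, it is RSS (resp.\ RDSS) as noted above, and it attains $v_0$ because $\norm{v(t)-Sv_0(t)}_{L^2(\Om)}^2=e^{s/2}\norm{w(s)}_{L^2(\Om)}^2$ with $s=\log t$ (the change of variables $z=R(-\al s)x/\sqrt t$ being measure-preserving and stabilizing $\R^3_+$), while $\norm{w(s)}_{L^2}$ is bounded; if the corrector used in the construction differs from $Sv_0$ one also checks the difference lies in the energy class.

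The main obstacle, and the genuinely new ingredient beyond \cite{BT1}, is the half-space case: one must establish, as in \S\ref{sec.Stokes}, the required mapping properties of the Stokes semigroup on $L^3_w(\R^3_+)=L^{3,\infty}(\R^3_+)$ --- boundedness of $Sv_0$ in $L^\infty(0,\infty;L^3_{w,\si}(\R^3_+))$ together with the scaling-homogeneous pointwise decay needed to put the corrector $U_0$ into the function space in which the nonlinear estimate of \cite{BT1} closes --- since, unlike on $\R^3$, there is no local Leray theory to fall back on. These bounds should follow by real interpolation from the classical $L^p_\si(\R^3_+)$, $1<p<\infty$, theory, but with the scaling homogeneity made explicit. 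Once they are in place, the energy estimate, the Galerkin limit, and the verification of the symmetry all run in parallel for $\Om\in\{\R^3,\R^3_+\}$.
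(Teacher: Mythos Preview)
Your overall architecture matches the paper's: pass to the rotating similarity variables with $\dot\th=\al$, define the corrector $U_0$ from the Stokes flow $Sv_0$, seek $u-U_0$ in the energy class via Galerkin plus a Brouwer fixed point for the periodic ODE, observe that the rotation terms $\al Jw$ and $\al(Jy)\cdot\nb w$ are energy-neutral, and undo the transforms to get $v$. Your verification of the initial condition via $\norm{v(t)-Sv_0(t)}_{L^2}^2=t^{1/2}\norm{u-U_0}_{L^2}^2$ with $t^{1/2}\to 0$ and periodicity is also what the paper does.

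There are, however, two places where your plan is too optimistic and would not close as stated. First, the a priori estimate is \emph{not} ``precisely the estimate that is the crux of \cite{BT1}'' in the half-space case. The dangerous term $\int (w\cdot\nb U_0)\cdot w$ cannot in general be made $\le\ga\norm{w}_{H^1}^2$ with $\ga<1$, so one must replace $U_0$ by a modified profile $W$ that is small in $L^q$ away from the origin and differs from $U_0$ only on a compact set, while remaining divergence-free and vanishing on $\pd\R^3_+$. In \cite{BT1} this divergence-free correction was built with a nonlocal singular-integral construction; that does not respect the half-space boundary. The paper's new device is to use the Bogovski\u\i\ map on a bounded Lipschitz domain (Lemma~\ref{lemma.bogovskii} and Lemma~\ref{lemma:W}) to produce the compactly supported correction $w_r$ with the right $W^{1,q}$ bounds. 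You should flag this rather than defer to \cite{BT1}.

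Second, your suggestion that the required Stokes bounds on $\R^3_+$ ``should follow by real interpolation from the classical $L^p_\si(\R^3_+)$ theory'' is not enough. What is needed (Assumption~\ref{AU_0}) is not merely $L^3_w\to L^3_w$ boundedness but $U_0\in L^\infty_sL^q_y$ for some $q>3$, local smoothness of $\pd_sU_0$ and $\nb U_0$, and the spatial decay $\sup_s\norm{U_0(s)}_{L^q(\Om\setminus B_R)}\to 0$. The paper remarks explicitly that the $L^p\to L^q$ bounds of \cite{Ukai} are insufficient; instead it uses Solonnikov's pointwise Green's-function estimate \eqref{Solonnikov.est} and a dyadic decomposition exploiting the RDSS scaling of $v_0$ (Lemmas~\ref{lemma:V0bound}--\ref{lemma:V0decay}). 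Interpolation alone will not give you the $L^q$ decay at spatial infinity.
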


\emph{Comments on Theorem \ref{thrm:existence}}
\begin{itemize}
\item If $\al=0$ then the class of RDSS solutions coincides with the class of $\lambda$-DSS solutions defined in \cite{Tsai-DSSI,BT1} (where they were only considered on the whole space). Theorem \ref{thrm:existence} therefore provides a construction of $\la$-DSS solutions on the half-space for any divergence free $\la$-DSS initial data belonging to $L^3_{w,\sigma}(\Omega)$.

\item If $v_0$ is RSS then it is RDSS for any $\lambda>1$ and thus there exist EP-solutions $v_\la$ to the 3D NSE on $\Omega\times [0,\infty)$ which are RDSS.  Letting $\la\to 1$ we can obtain a solution $v$ which is RSS. This procedure mimics that given in \cite[Section 5.1]{BT1} and we omit the details.
\end{itemize}

In our proof we directly construct a solution to the rotated Leray equations \eqref{eq:Leray-rot}.  To do this we perturb $u(y,s)$ by subtracting the image $U_0(y,s)$ of the solution to the Stokes equations under the rotated self-similar transform, i.e.~we seek a solution of the form $U=u-U_0$.  Essentially, we are treating $U_0$ as the boundary data at spatial infinity of $u$.  Fortunately, $U\in L^2$ (which is untrue for both $u$ and $U_0$).  To get formal a priori estimates for $U$ via energy methods we develop new bounds for $U_0$ using self-similarity and Solonnikov's formulas in $\R^3_+$ \cite{Solonnikov} (in \cite{BT1} we were working with the solution to the heat equation in $\R^3$ which was easier to bound). Unfortunately, $U_0$ does not give us the needed a priori bound since we don't have
\[
\int_{\R^3} (U\cdot \nabla U_0)\cdot U \,dy\leq \gamma \| U\|_{H^1}^2 ,
\]
where $0<\gamma <1$. The idea is to replace $U_0$ by an asymptotically similar profile $W$ which allows the above estimate.  In the whole space case \cite{BT1}, we used a non-compact correction involving singular integrals to ensure $W$ was divergence free. This does not work when there are boundaries.  To get around this we construct another profile $W$ using the Bogovskii map \cite{Bog}.

On the other hand, since we do not seek local Leray solutions our argument is shorter than in \cite{BT1}.  In particular, we do not need to use mollifiers to obtain the local energy inequality when we are constructing $U$ via a Galerkin scheme.  We also do not need a priori bounds for the pressure; these were only used in \cite{BT1} to obtain the local energy inequality.  
 
\begin{remark}\label{remark1.4}
Note that, in the whole space $\R^3$ case, we can recover pressure estimate and local energy inequality in Theorem \ref{thrm:existence}, in the same way as in \cite{BT1}.
\end{remark}

\medskip \noindent \emph{Notation.}\quad We will use the following function spaces on a domain $\Om\subset \R^3$:
\begin{align*}
&\mathcal V=\{f\in C_0^\infty({ \Omega;\R^3}) ,\, \nabla \cdot f=0 \},
\\& X = \mbox{the closure of~$\mathcal V$~in~$H_0^1(\Omega)$} ,
\\& H = \mbox{the closure of~$\mathcal V$~in~$L^2(\Omega)$},
\end{align*}where $H_0^1(\Omega)$ is the closure of $C_0^\infty(\Omega)$ in the Sobolev space $H^1(\Omega)$.  Let $X^*(\Omega)$ denote the dual space of $X(\Omega)$. 
Let $(\cdot,\cdot)$ be the $L^2(\Omega)$ inner product and $\langle\cdot,\cdot\rangle$ be the dual product for $H^1$ and its dual space $H^{-1}$, or that for $X$ and $X^*$.
 
\medskip \noindent \emph{Organization.}\quad In Section 2 we construct solutions to a rotationally corrected Leray system.  In Section 3 we study RDSS solutions to the Stokes equations.  Section 4 contains the proof of Theorem \ref{thrm:existence} which uses the results of Sections 2 and 3.  Finally, in Section 5 the Appendix, we give comments on the backward case.

\section{An auxiliary problem in similarity variables}\label{sec.Leray}

In this section we study a time periodic weak solution to the auxiliary problem
\begin{equation}
\label{eq:aux}
\begin{array}{ll}
	\partial_s u +\alpha J u  - \alpha Jy \cdot \nabla u
	-\Delta u=\frac 12 u+\frac 12 y\cdot \nabla u -\nabla p -u\cdot\nabla u	&\mbox{~in~}\Omega \times \R
	\\  \nabla\cdot u = 0  &\mbox{~in~}\Omega\times \R
	\\ u = 0  &\mbox{~on~}\partial\Omega \times \R 
	\\ 	\displaystyle \lim_{|y_0|\to\infty} \int_{B_1(y_0)\cap \Omega} |u(y,s)-U_0(y,s)|^2\,dx= 0& \mbox{~for all~}s\in \R
	\\  u(\cdot,s)=u(\cdot, s+T) &\mbox{~for all~}s\in \R,
\end{array}
\end{equation}where $\Omega \in \{\R^3,\R^3_+  \}$ and $U_0(y,s)$ is a given $T$-periodic divergence free vector field defined on $\Omega$ which vanishes on $\partial\Omega$.  If $\Om=\R^3$ we ignore the boundary condition on $\pd \Om$. 
Our goal is to construct a solution $u$ satisfying the problem in the weak sense,~i.e.
\begin{equation}\label{u.eq-weakR}
\int_\R  \big( (u,\partial_s f)-(\nabla u,\nabla f)+(
   \alpha Jy \cdot \nabla u-\al J u
 + \frac 12 u
+\frac 12 y\cdot\nabla u-u\cdot\nabla u ,f)  \big)  \,ds =0,
\end{equation}
holds for all divergence free $f\in C_0^\I( \Om\times \R)$.

In our application we require that $U_0$ additionally satisfies the following assumption.

\begin{assumption} \label{AU_0}
The vector field $U_0(y,s) :\Omega \times \R \to \R^3$ is periodic in $s$ with period $T>0$, divergence free, vanishes on $\partial\Omega$, and satisfies, for some $q\in (3,\infty]$,
\begin{itemize}
\item for all divergence free $f\in C_0^\I( \Om\times \R)$,
\begin{align}
\int_\R  \big( (U_0,\partial_s f)-(\nabla U_0,\nabla f)+(
   \alpha Jy \cdot \nabla U_0-\al J U_0
 + \frac {U_0}2 
+\frac y2 \cdot\nabla U_0,f)  \big)  \,ds =0, 
\label{U0.eq}
\end{align}
\item the inclusions:
\begin{align*}
& U_0\in L^\infty (0,T;L^4\cap L^q(\Omega )), %
\\& \partial_s U_0 \in L^\infty(0,T;L^{6/5}_{loc}(\overline \Omega)),
\quad { \nabla U_0\in L^2(0,T;L_{loc}^{2}(\overline \Omega)),}
\end{align*}
\item the decay estimate:
\begin{equation} \label{decay estimate}
\sup_{s\in [0,T]}\|U_0  \|_{L^q(\Omega\setminus B_R)}\leq \Theta(R),
\end{equation}
for some $\Theta:[0,\infty)\to [0,\infty)$ such that $\Theta(R)\to 0$ as $R\to\infty$.
\end{itemize} 
\end{assumption}

The decay estimate \eqref{decay estimate} ensures the existence of a good revised asymptotic profile in Lemma \ref{lemma:W}. The assumption $U_0\in L^\infty (0,T;L^4(\Omega))$ implies in particular $U_0 \cdot \nb U_0 \in L^2 (0,T;H^{-1}(\Omega))$, which is essential for the a priori bound in Lemma \ref{lemma:Galerkin}. 

Periodic weak solutions to \eqref{eq:aux} %
are defined as follows.

\begin{definition}[Periodic weak solutions to \eqref{eq:aux}]
\label{def:periodicweaksolutionR3} 
Let $U_0$ satisfy Assumption \ref{AU_0}. 
The field $u$ is a periodic weak solution to \eqref{eq:aux} if it is divergence free,
{if $u|_{\pd \Om}=0$,} if {$u(s)=u(s+T)$} for all $s\in \R$, if
\begin{equation}\notag
U:= 
u-U_0\in L^\infty(0,T;L^2(\Omega))\cap L^2(0,T;H^1(\Omega)),
 \end{equation} 
and if $u$ satisfies \eqref{u.eq-weakR} for all divergence free $f\in C_0^\I(\Om\times \R)$.
\end{definition}

The main result of this section is the following theorem.

\begin{theorem}[Existence of periodic weak solutions to \eqref{eq:aux}]\label{thrm.aux}
Let $\Omega\in \{\R^3,\R^3_+\}$ and assume $U_0:\Omega\times \R\to \R^3$ satisfies Assumption \ref{AU_0} with $q=10/3$. Then, there exists a periodic weak solution to \eqref{eq:aux} corresponding to $U_0$ in the sense of Definition \ref
{def:periodicweaksolutionR3}.
\end{theorem}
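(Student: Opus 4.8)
The plan is to construct the periodic weak solution directly by a Galerkin scheme, producing the time periodicity through a Brouwer fixed point argument, after first trading the given profile $U_0$ for a better one. Concretely, the first step is to invoke Lemma~\ref{lemma:W} to replace $U_0$ by a revised profile $W=W(y,s)$ that is divergence free, $T$-periodic, vanishes on $\partial\Om$, lies in $L^\infty(0,T;L^4(\Om))$, agrees with $U_0$ outside a fixed ball (so that $W-U_0$ is compactly supported in $y$ and belongs to $L^\infty(0,T;L^2)\cap L^2(0,T;H^1)$), and — crucially — obeys the stability bound $|\int_\Om(\psi\cdot\nb\psi)\cdot W\,\ind y|\le\ga\norm{\nb\psi}_{L^2}^2+C\norm{\psi}_{L^2}^2$ for all $\psi\in X(\Om)$, with $\ga<1$ and $C<\tfrac14$. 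Here the decay hypothesis \eqref{decay estimate} together with the exponent $q=10/3$ is essential: the far field of $W$ has small $L^{10/3}$ norm, while the part of $W$ near the origin is dealt with using a Bogovskii correction to keep it divergence free. One then sets $\bar U:=u-W$. Since $W$ satisfies \eqref{U0.eq} up to a defect $E\in L^2(0,T;H^{-1}(\Om))$ supported in a fixed ball, subtracting shows that $u$ solves \eqref{eq:aux} if and only if $\bar U$ is $T$-periodic, divergence free, vanishes on $\partial\Om$, lies in $L^\infty(0,T;L^2)\cap L^2(0,T;H^1)$, and solves weakly $\partial_s\bar U+\al J\bar U-\al Jy\cdot\nb\bar U-\Delta\bar U-\tfrac12\bar U-\tfrac12 y\cdot\nb\bar U+\nb p+(\bar U+W)\cdot\nb(\bar U+W)=E$. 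The solution we seek is then $u=\bar U+W$, and automatically $u-U_0=\bar U+(W-U_0)$ lies in the energy class.

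The next step is the formal a priori bound. Testing this equation with $\bar U$, the rotation terms vanish (since $J^T=-J$ and $\div(Jy)=0$), $\int(\bar U\cdot\nb\bar U)\cdot\bar U=\int(W\cdot\nb\bar U)\cdot\bar U=0$, and the Leray scaling terms $-\tfrac12\bar U-\tfrac12 y\cdot\nb\bar U$ contribute the favorable damping $+\tfrac14\norm{\bar U}_{L^2}^2$. The only delicate term is $\int(\bar U\cdot\nb W)\cdot\bar U=-\int(\bar U\cdot\nb\bar U)\cdot W$, which the choice of $W$ absorbs; the forcing $\int(W\cdot\nb W)\cdot\bar U$ and $\langle E,\bar U\rangle$ are bounded by $\epsilon\norm{\nb\bar U}_{L^2}^2$ plus an $L^1(0,T)$ function of $s$ (using $W\in L^\infty(0,T;L^4)$ and $E\in L^2(0,T;H^{-1})$). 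Choosing $\epsilon$ so small that $\ga+2\epsilon<1$, one arrives at $\tfrac{d}{ds}\norm{\bar U}_{L^2}^2+c\norm{\nb\bar U}_{L^2}^2+c'\norm{\bar U}_{L^2}^2\le g(s)$ with $c,c'>0$ and $g\in L^1(0,T)$. Integrating over one period and using periodicity to kill the total derivative bounds $\int_0^T(\norm{\nb\bar U}_{L^2}^2+\norm{\bar U}_{L^2}^2)\,ds$, after which the differential inequality bounds $\esssup_s\norm{\bar U(s)}_{L^2}^2$; all the bounds depend only on $U_0$. (This is essentially Lemma~\ref{lemma:Galerkin}.)

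For the construction itself, let $\{a_k\}$ be an orthonormal basis of $H(\Om)$ consisting of smooth compactly supported divergence free fields, and let $P_n$ project onto $V_n=\mathrm{span}\{a_1,\dots,a_n\}$. For each $n$ one solves the finite dimensional ODE obtained by projecting the $\bar U$-equation onto $V_n$: local solvability is immediate (quadratic nonlinearity, smooth coefficients on compact supports), and the a priori estimate — valid verbatim for $\bar U^n\in V_n\subset X$ — gives global existence together with an $n$-independent ball in $H$ that the time-$T$ map $\mathcal P_n:\bar U^n(0)\mapsto\bar U^n(T)$ maps into itself (this uses the strict damping $c'\norm{\bar U^n}_{L^2}^2$). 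Brouwer's theorem applied to the continuous map $\mathcal P_n$ yields a $T$-periodic Galerkin solution obeying the uniform bounds. These, together with $\partial_s\bar U^n\in L^{4/3}(0,T;X^*)$ read off the equation (the worst term $\bar U^n\cdot\nb\bar U^n$ being controlled in $X^*$ by $\norm{\bar U^n}_{L^4}^2\lesssim\norm{\bar U^n}_{L^2}\norm{\bar U^n}_{H^1}$), allow one to extract, via Aubin--Lions--Simon on each ball and a diagonal argument, a subsequence converging strongly in $L^2$ in space-time on compact sets and weakly-$*$ in the energy norms. One passes to the limit in \eqref{u.eq-weakR}, which involves only compactly supported test functions so that the unboundedness of $\Om$ is harmless; periodicity persists by weak continuity in time; and $u:=\bar U+W$ is then verified to meet all requirements of Definition~\ref{def:periodicweaksolutionR3}, the defect $E$ cancelling when the limiting equation for $\bar U$ is added to the equation for $W$.

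The heart of the argument — and its main obstacle — is the a priori estimate in combination with Lemma~\ref{lemma:W}. The Leray scaling terms supply the zeroth-order damping that makes a genuinely periodic (rather than merely forward-in-time) solution possible, but the ``stretching by the background'' term $\int(\bar U\cdot\nb\bar U)\cdot W$ is scaling-critical and cannot be absorbed using $U_0$ directly; producing a substitute profile $W$ that both remains divergence free up to the boundary (forcing the Bogovskii operator in place of the singular-integral correction of \cite{BT1}) and is small in the relevant critical quantity — which is exactly where the decay estimate and the exponent $q=10/3$ enter — is the decisive and most delicate point. Everything else is a routine adaptation of the weak-solution/Galerkin machinery, simplified relative to \cite{BT1} because no local energy inequality or pressure estimate is needed.
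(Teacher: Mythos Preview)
Your proposal is correct and follows essentially the same approach as the paper: replace $U_0$ by the revised profile $W$ of Lemma~\ref{lemma:W}, set $U=u-W$, run a Galerkin scheme whose a priori energy estimate exploits the Leray-scaling damping together with the smallness of $W$ in $L^{10/3}$ to absorb the critical stretching term, obtain $T$-periodicity via Brouwer, and pass to the limit. The paper is terser on the compactness step (it simply invokes ``standard arguments'' rather than Aubin--Lions explicitly) and phrases the absorption estimate directly as $|(U_k\cdot\nb W,U_k)|\le\tfrac18\|U_k\|_{H^1}^2$ via \eqref{Wsmall.est}, but the argument is otherwise the same.
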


The choice $q=10/3$ was chosen for Remark \ref{remark1.4}, for the convenience of the proof of the local energy inequality, see \cite{BT1}. Otherwise we can take any $q \in (3,\infty]$.

To prove Theorem \ref{thrm:existence} we seek a solution of the form $u=U+U_0$ as this homogenizes the boundary condition at spatial infinity.  This leads to a source term in the perturbed equation that is not necessarily small.  To get around this we replace $U_0$ by $W$ which eliminates the possibly large behavior of $U_0$ near the origin, with the correction $W-U_0$ being compactly supported. This will give us the crucial bound,
 \begin{equation}
   \int (f\cdot\nabla f)\cdot W \,dy \leq \delta ||f||_{H_0^1(\Om)}^2,
\end{equation}
where $\delta$ is a given small parameter.

Fix $Z\in C^\infty(\Om)$ with $0 \le Z \le 1$, $Z(y)=1$ for $|y|>1$, and $Z(y)=0$ for $|y|<1/2$. This can be done so that $|\nb Z|+|\nb^2 Z| \lesssim 1$.  Fix $r>1$ and let $U_r(y)=U_0(ry)$.  Let $\hat U_r(y)=Z(y)U_r(y)$.  Then, 
\[
\nb\cdot \hat U_r=U_r\cdot \nb Z.
\]
Since this is non-zero we will need a correction term obtained by Bogovskii's construction from \cite{Bog} which we now recall.

\begin{lemma}\label{lemma.bogovskii}
Let $K$ be a bounded Lipschitz domain in $\R^3$.  There is a linear map $\Phi$ such that for any scalar $f\in C_0^\I(K)$ with $\int_{K}f\,dx=0$, we have $\Phi f \in  C_0^\I(K)$, 
\[
\nb \cdot \Phi f = f,
\]
and,
\[
 \|\Phi f \|_{W^{1,q}(K)}\leq c(q,K) \|f \|_{L^q(K)	},
\]
for any $1<q<\I$.  Thus $\Phi $ can be extended to a bounded map from $\{f\in L^q(K): \int_{K}f\,dx=0 \}$ to $W_0^{1,q}(K)$.
\end{lemma}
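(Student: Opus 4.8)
This is the classical Bogovskii construction \cite{Bog}; the plan is to recall its main steps. The first step is a \emph{reduction to the star-shaped case}. Since $K$ is a bounded Lipschitz domain, it admits a finite cover $K=\bigcup_{j=1}^N K_j$ by open subsets, each of which is star-shaped with respect to an open ball $B_j$ with $\overline{B_j}\subset K_j$ (in fact star-shaped with respect to every point of $\overline{B_j}$). Given $f\in C_0^\infty(K)$ with $\int_K f=0$, one uses a smooth partition of unity $\{\chi_j\}$ subordinate to $\{K_j\}$ together with the connectedness of $K$ to write $f=\sum_{j=1}^N f_j$ with $f_j\in C_0^\infty(K_j)$, $\int_{K_j}f_j=0$, and $\|f_j\|_{L^q(K_j)}\lec\|f\|_{L^q(K)}$: the functions $\chi_j f$ need not have zero mean, so one transfers the mass defects $\int\chi_j f$ through the overlaps $K_j\cap K_{j'}$ along a connecting chain using fixed auxiliary bump functions, each transfer costing a bounded multiple of $\|f\|_{L^1(K)}\lec\|f\|_{L^q(K)}$. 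It then suffices to construct, for each $j$, a linear operator $\Phi_j$ on $\{g\in C_0^\infty(K_j):\int g=0\}$ with the stated properties relative to $K_j$, and set $\Phi f=\sum_j\Phi_j f_j$.

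Second, the \emph{explicit operator on a star-shaped domain}. Assume now $K$ is star-shaped with respect to a ball $B$ with $\overline B\subset K$, fix $\omega\in C_0^\infty(B)$ with $\omega\ge0$ and $\int\omega=1$, and (with $n=3$) define
\[
\Phi f(x)=\int_K f(y)\,\mathbf N(x,y)\,dy,\qquad \mathbf N(x,y)=\frac{x-y}{|x-y|^{n}}\int_{|x-y|}^{\infty}\omega\bke{y+r\,\frac{x-y}{|x-y|}}\,r^{n-1}\,dr.
\]
For $x\ne y$ the inner integral is finite (it is supported in a bounded range of $r$ since $\omega$ has compact support) and the kernel is smooth there; moreover $\mathbf N(x,y)$ vanishes unless $x$ lies on a segment joining $y$ to a point of $B$, so $\supp\Phi f$ is contained in the compact set $\bigcup_{y\in\supp f}\bigcup_{b\in\overline B}[y,b]\subset K$. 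A direct computation — changing variables along the rays and using $\int_K f=0$ — gives the identity $\nb\cdot\Phi f=f-\omega\int_K f=f$, and a routine verification shows $\Phi f\in C_0^\infty(K)$.

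Third, the \emph{$W^{1,q}$ estimate}. Since $|\mathbf N(x,y)|\lec|x-y|^{1-n}$ and $K$ is bounded, Young's inequality applied to the integrable kernel $|x-y|^{1-n}\mathbf 1_{\{|x-y|\le \mathrm{diam}\,K\}}$ gives $\|\Phi f\|_{L^q(K)}\lec_{q,K}\|f\|_{L^q(K)}$ for $1<q<\infty$. Differentiating the kernel, one writes $\pd_k(\Phi f)_i(x)=\mathrm{p.v.}\!\int_K G_{ik}(x,x-y)\,f(y)\,dy+\int_K H_{ik}(x,y)\,f(y)\,dy+c_{ik}(x)\,f(x)$, where, for each fixed $x$, the map $z\mapsto G_{ik}(x,z)$ is smooth off the origin, homogeneous of degree $-n$, and has zero mean over the unit sphere, with all bounds uniform in $x\in K$; $H_{ik}$ is weakly singular, $|H_{ik}(x,y)|\lec|x-y|^{1-n}$ on $K\times K$; and $c_{ik}\in L^\infty(K)$. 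The first term is a Calderón--Zygmund singular integral operator, hence bounded on $L^q(K)$ for $1<q<\infty$; the second is bounded on $L^q(K)$ by Young's inequality as above; the third is trivially bounded. This yields $\|\Phi f\|_{W^{1,q}(K)}\le c(q,K)\|f\|_{L^q(K)}$.

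Finally, assembling the pieces proves the estimate on $K$; since $\{f\in C_0^\infty(K):\int_K f=0\}$ is dense in $\{f\in L^q(K):\int_K f=0\}$ and $W_0^{1,q}(K)$ is complete, $\Phi$ extends by continuity to the claimed bounded map. I expect the main obstacle to be the $L^q$ bound on $\nb\Phi f$: identifying the principal part of $\pd_k(\Phi f)_i$ as a genuine Calderón--Zygmund kernel with the cancellation property and with kernel bounds uniform in the base point $x$, so that the Calderón--Zygmund $L^q$ theory applies; the Lipschitz regularity of $\pd K$ enters only through the finite star-shaped decomposition in the first step. (See \cite{Bog} and the standard references for the remaining details.)
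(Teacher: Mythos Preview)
Your sketch is the standard proof of Bogovskii's result and is essentially correct. Note, however, that the paper does not prove this lemma at all: it simply quotes it from \cite{Bog} and immediately applies it, so there is no ``paper's own proof'' to compare against. Your outline of the reduction to star-shaped pieces, the explicit kernel formula, and the Calder\'on--Zygmund analysis of $\nabla\Phi f$ is exactly the classical argument the citation points to.
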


Let $K=B_1\cap \Omega$.  Since $U_0$ is divergence free we have,
\[
\int U_r\cdot \nb Z \,dx=0,
\]
and can thus apply Lemma \ref{lemma.bogovskii} and let $w_r:=\Phi ( U_r\cdot \nb Z)$. 
Then,
\[
\nabla\cdot w_r =-U_r\cdot \nb Z,
\] 
and, furthermore,
\EQ{
\label{wr.est}
  \|w_r\|_{W^{1,q}(B_1)}\leq c(q, B_1)\|U_r\cdot \nb Z\|_{L^q}
  \leq c(q, B_1,Z)\|U_r\|_{L^q}. 
}

Let $\hat U_0(ry)=\hat U_r(y)$ and $w(ry)=w_r(y)$.  
Our replacement for $U_0$ is, 
\[
W:=\hat U_0 + w, \quad W(y) = U_0(y) Z(y/r) + w_r(y/r).
\]

The following notation is convenient.  For a given $F(y,s)$ and any $\zeta \in C^1_0(\Omega)$, denote
\begin{equation}%
\notag
LF = 	\partial_s F+\alpha JF   - \alpha Jy \cdot \nabla F
-\Delta F-\frac 12F-\frac 12y\cdot \nabla F ,
\end{equation}
and
\begin{equation}
\label{LW.def}
\bka{LF,\zeta} =(\partial_s F+\al J F   - \alpha Jy \cdot \nabla F
-\frac 12F-\frac 12y\cdot \nabla F,\zeta) + (\nabla F, \nabla \zeta).
\end{equation}

\begin{lemma}[Revised asymptotic profile]
\label{lemma:W} Let $\Omega\in \{\R^3,\R^3_+ \}$.
Fix $q\in (3,\infty]$ and suppose $U_0$ satisfies Assumption \ref{AU_0} for this $q$. For any small $\de>0$, let $W$, $\hat U_0$, and $w$ be defined as above with $r=r_0(\de)$ sufficiently large.  Then $W$ is $T$-periodic and divergence free, 
\begin{equation}\label{energy-perturbation}
U_0 - W \in L^\infty(0,T; L^2(\Omega)) \cap L^2(0,T; H^1(\Omega)) ,
\end{equation} 
\begin{equation}\label{Wsmall.est}
\|W\|_{L^\infty(0,T;L^q(\Omega))}\leq \de, 
\end{equation} 
\begin{equation}\label{WL4.est}
\norm{W}_{L^\infty(0,T;L^4(\Omega))}\leq c(r_0,U_0),
\end{equation}
and
\begin{equation}
\label{LW.est}
\norm{LW}_{L^\infty(0,T; H^{-1}(\Omega))} \leq c(r_0,U_0), %
\end{equation}
where $c(r_0,U_0)$ depends on $r_0$ and quantities associated with $U_0$ which are finite by Assumption \ref{AU_0}.
\end{lemma}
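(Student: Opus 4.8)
The plan is to establish the six conclusions ($T$-periodicity, divergence-freeness, and \eqref{energy-perturbation}--\eqref{LW.est}) in turn; the first five are short computations, and the sixth, \eqref{LW.est}, is where the real work lies.

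\emph{Divergence-freeness, periodicity, and the support of $W-U_0$.} Since $U_0$, hence $U_r$, is divergence free, $\nabla\cdot\hat U_r=\nabla\cdot(ZU_r)=U_r\cdot\nabla Z$; rescaling $z\mapsto z/r$ turns this into $\nabla\cdot\hat U_0=\tfrac1r(U_r\cdot\nabla Z)(\cdot/r)$, which is cancelled exactly by $\nabla\cdot w=-\tfrac1r(U_r\cdot\nabla Z)(\cdot/r)$, so $\nabla\cdot W=0$. Periodicity of $W$ follows because $Z$ does not depend on $s$ and $\Phi$ is a fixed linear map, so $w_r(\cdot,s)=\Phi\big((U_r\cdot\nabla Z)(\cdot,s)\big)$ is $T$-periodic. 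Finally $W-U_0=-U_0\,(1-Z(\cdot/r))+w$ is supported in $\overline{B_r}\cap\Omega$, since $1-Z(\cdot/r)$ vanishes for $|y|>r$ and $w=w_r(\cdot/r)$ is supported in $B_r$.

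\emph{The bounds \eqref{energy-perturbation}, \eqref{WL4.est}, \eqref{Wsmall.est}.} Because $W-U_0$ is supported in the bounded set $\overline{B_r}\cap\Omega$, \eqref{energy-perturbation} is a purely local statement: on $B_r$ one has $U_0\in L^\infty(0,T;L^q)\subset L^\infty(0,T;L^2)$ and $\nabla U_0\in L^2(0,T;L^2)$ by Assumption \ref{AU_0}, while for $w$ one rescales the Bogovskii bound \eqref{wr.est} (using $q>2$, so $W^{1,q}(B_1)\hookrightarrow W^{1,2}(B_1)$) to get $\|w\|_{L^2(B_r)}+\|\nabla w\|_{L^2(B_r)}\le c(r,U_0)$. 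The same rescaling, now using $q>3$ so $W^{1,q}(B_1)\hookrightarrow L^4(B_1)$, gives \eqref{WL4.est}, together with $\|U_0Z(\cdot/r)\|_{L^4}\le\|U_0\|_{L^4(\Omega)}$. For \eqref{Wsmall.est} one uses the decay estimate \eqref{decay estimate}: since $0\le Z\le1$ and $Z(\cdot/r)$ is supported in $\{|y|>r/2\}$, one has $\|U_0Z(\cdot/r)\|_{L^q(\Omega)}\le\|U_0\|_{L^q(\Omega\setminus B_{r/2})}\le\Theta(r/2)$; and since $\nabla Z$ is supported in $\{1/2<|y|<1\}$, $\|w_r\|_{W^{1,q}(B_1)}\le c\|U_r\|_{L^q(\{1/2<|y|<1\})}=c\,r^{-3/q}\|U_0\|_{L^q(\{r/2<|z|<r\})}\le c\,r^{-3/q}\Theta(r/2)$, so after rescaling $\|w\|_{L^q(\Omega)}\le c\,\Theta(r/2)$. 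Hence $\|W\|_{L^\infty(0,T;L^q)}\le(1+c)\Theta(r/2)$, and one fixes $r_0=r_0(\de)$ with $(1+c)\Theta(r_0/2)\le\de$.

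\emph{The estimate \eqref{LW.est}.} Fix $r=r_0$. The key observation is that $L$ annihilates $U_0$ when tested against divergence free fields: inserting $f(y,s)=\chi(s)\zeta(y)$ with $\chi\in C_0^\infty(\R)$ and divergence free $\zeta\in C_0^\infty(\Omega)$ into \eqref{U0.eq}, and integrating by parts in $s$ (legitimate since $\partial_s U_0\in L^\infty(0,T;L^{6/5}_{loc})$ makes $s\mapsto(U_0(s),\zeta)$ absolutely continuous), yields $\langle LU_0(s),\zeta\rangle=0$ for a.e.\ $s$. Therefore, for such $\zeta$,
\[
\langle LW(s),\zeta\rangle=\langle L(W-U_0)(s),\zeta\rangle ,
\]
and since $W-U_0$ is supported in the fixed bounded set $B_{r_0}\cap\Omega$, every term of $\langle L(W-U_0)(s),\zeta\rangle$ — read off from \eqref{LW.def} with $F=W-U_0$ — is dominated by
\[
\big(\|\partial_s(W-U_0)\|_{L^{6/5}(B_{r_0})}+\|W-U_0\|_{L^2(B_{r_0})}+\|\nabla(W-U_0)\|_{L^2(B_{r_0})}\big)\|\zeta\|_{H^1_0(\Omega)},
\]
using $|y|\le r_0$ on the support to absorb the factors $\alpha Jy$ and $y$, and $H^1_0(\Omega)\hookrightarrow L^6$ to pair with the $L^{6/5}$ term. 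Each norm on the right is finite and independent of $\zeta$: the $U_0$-parts by $U_0\in L^\infty(0,T;L^q)$, $\nabla U_0\in L^2(0,T;L^2_{loc})$, $\partial_s U_0\in L^\infty(0,T;L^{6/5}_{loc})$, and the $w$-parts by \eqref{wr.est} and its $\partial_s$-analogue (namely $\partial_s w_r=\Phi(\partial_s U_r\cdot\nabla Z)$, finite since $\partial_s U_0\in L^{6/5}_{loc}$ and $\nabla Z$ has compact support), all after the same rescalings as above. Since $LW(s)$ thus agrees, on the dense subspace of divergence free $C_0^\infty$ fields, with the bounded functional $\zeta\mapsto\langle L(W-U_0)(s),\zeta\rangle$, it extends to an element of $H^{-1}(\Omega)$ (equivalently $X^*$) with the asserted bound $c(r_0,U_0)$.

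\emph{Main obstacle.} The entire difficulty is concentrated in \eqref{LW.est}. One must exploit that $U_0$ weakly solves the \emph{linear} Leray--Stokes system \eqref{U0.eq} in order to discard the ``bulk'' of $LW$ — the part that is only locally integrable at spatial infinity, where $W=U_0$ and $\nabla U_0,\partial_s U_0$ are merely in $L^2_{loc},L^{6/5}_{loc}$ — leaving only the compactly supported remainder $L(W-U_0)$; and one must carefully track the $r$-dependent scaling factors arising from passing between $U_0$ and $U_r$, and between $w$ and $w_r$, both here and in the smallness bound \eqref{Wsmall.est}. Everything else is Hölder's inequality together with Lemma \ref{lemma.bogovskii} on the two fixed bounded Lipschitz domains $B_1\cap\Omega$ and $B_{r_0}\cap\Omega$.
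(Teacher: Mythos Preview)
Your argument is correct and follows the same strategy as the paper: exploit that $LU_0=0$ (weakly, against divergence-free test functions) so that only the compactly supported piece $W-U_0$ contributes, then control each term by local norms of $U_0$ together with the Bogovskii estimate \eqref{wr.est} for $w$ and its $\partial_s$-analogue. The paper organizes the last step via the Leibniz identity $L(Z_*U_0)=(LU_0)Z_*+[\text{commutators in }\nabla Z_*]$ rather than by directly subtracting $\langle LU_0,\zeta\rangle$, but this is only cosmetic.

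One small caveat worth noting: your displayed bound for $\langle L(W-U_0)(s),\zeta\rangle$ contains $\|\nabla(W-U_0)(s)\|_{L^2(B_{r_0})}$, which you justify via $\nabla U_0\in L^2(0,T;L^2_{loc})$. Taken literally this only yields $LW\in L^2(0,T;H^{-1})$, not the stated $L^\infty$ in time. The advantage of the paper's commutator form is that the bad term appears as $(\nabla Z_*\cdot\nabla U_0,\zeta)$ --- paired with $\zeta$ rather than $\nabla\zeta$ --- so one integration by parts moves the derivative back onto $U_0\in L^\infty(0,T;L^2_{loc})$ and recovers the uniform-in-$s$ bound. (The paper's own exposition is equally terse on this point, and in any case $L^2(0,T;H^{-1})$ already suffices for the Gronwall argument in Lemma~\ref{lemma:Galerkin}.)
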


\begin{proof} 
$T$-periodicity in $s$ follows from the fact that $U_0$ is $T$-periodic.  $W$ is divergence free since $\nabla\cdot w_r =-U_r\cdot \nb Z$.

To see \eqref{energy-perturbation}, recall that $U_0-W= (U_0-\hat U_0)-w$.  
Both $U_0-\hat U_0$ and $w$ are supported in $K$. Since $w\in W_0^{1,q}(K)$ by \eqref{wr.est}, we have $w\in L^\I(0,T;H^1(\Omega))$. The difference $U_0-\hat U_0$ is in $L^\I L^2 \cap L^2 H^1$  by Assumption \ref{AU_0}.  Therefore, we have \eqref{energy-perturbation}.

We now get refined estimates for $w$ in $W^{1,q}$ using \eqref{wr.est}: For any $r>1$ we have
\begin{align}
\int_{B_r} |w(y)|^q\,dx
&=\int_{B_1} |  w(rz)|^qr^3\,dz \nonumber
\\&= r^{3} \int_{B_1}|  w_r(z)|^q\,dz \nonumber
\\&\leq c(q,B_1,Z)r^{3} \int_{B_1\setminus B_{1/2}} |U_0(zr)|^q\,dz \nonumber
\\&= c(q,B_1,Z)\int_{B_r\setminus B_{r/2}} |U_0(y)|^q\,dy,
\label{w.est}
\end{align}
and
\begin{align}
\int_{B_r} |\nabla w(y)|^q\,dy
&= \int_{B_1} |r^{-1} \nb_z w(rz)|^qr^3\,dz \nonumber
\\&=r^{3-q} \int_{B_1}|\nb_z w_r(z)|^q\,dz \nonumber
\\&\leq c(q,B_1,Z)r^{3-q} \int_{B_1\setminus B_{1/2}} |U_0(zr)|^q\,dz \nonumber
\\&= c(q,B_1,Z)r^{-q} \int_{B_r\setminus B_{r/2}} |U_0(y)|^q\,dy.
\label{Dw.est}
\end{align}
Note that the constants above do not depend on $r$.  

Since $W=\hat U_0 + w$, by \eqref{w.est} and the definition of $\Theta(r)$,
\[
\|W\|_{L^\infty(0,T;L^q(\Omega))} \le \Theta(r_0/2) + c(q,B_1,Z)^{1/q} \Theta(r_0/2)
\le \de ,
\]
for $r_0$ sufficiently large. This show \eqref{Wsmall.est}.

To prove \eqref{WL4.est}, we first establish a pointwise estimate for $w$,
\[
|w(y)| \lec r^{1-3/q} \norm{\nabla w}_{L^q(B_r)} +  r^{-3/q} \norm{w}_{L^q(B_r)}.
\]
Since $w$ is compactly supported this implies that $w\in L^\I(0,T;L^4(\Om) )$.  By assumption \ref{AU_0}, $U_0\in L^\I(0,T;L^4(\Om))$.  Therefore, $W\in L^\I(0,T;L^4(\Om))$ also.

We now prove \eqref{LW.est}.  First observe that, since $\partial_s w_r = \Phi(\pd_s U_r \cdot \nabla Z)$, by \eqref{Dw.est} we have
\[
 \| \partial_s w\|_{L^2(\Omega)}\lesssim 
 \| \nabla \partial_s w\|_{L^{6/5}(\Omega)}\lesssim 
r^{-1} \|  \partial_s U_0\|_{L^{6/5}(B_r)},
\]
which is finite by Assumption \ref{AU_0}.
Since $w$ is compactly supported, the $W^{1,q}$ estimates imply $w\in H^1$.  It follows that $\| \alpha Jw   - \alpha Jy \cdot \nabla W
-\Delta w-w-y\cdot \nabla w \|_{H^{-1}}<\infty$.  Hence $Lw\in L^\I(0,T;H^{-1})$.  Let $Z_*(y)=Z(-y/r_0)$ so that $\hat U_0(y)= U_0(y) Z_*(y)$. Then,
\[
LW=(LU_0)Z_*+Lw -\al J y\cdot\nb Z_* U_0 -2\nb Z_*\cdot\nb U_0-U_0\Delta Z_* - y\cdot\nb Z_* U_0. 
\]
By Assumption \ref{AU_0}, $LU_0=0$.  Since $Z_*$ is compactly supported, Assumption \ref{AU_0} implies that $\al J y\cdot\nb Z_* U_0 -2\nb Z_*\cdot\nb U_0-U_0\Delta Z_* - y\cdot\nb Z_* U_0\in L^2$.  Hence $LW\in L^\I(0,T;H^{-1})$.
\end{proof}

We seek a solution to \eqref{eq:aux} of the form $u=W+ U$, where $W$ is as in Lemma \ref{lemma:W} for $\de = 1/4$ for a given $U_0$ satisfying Assumption \ref{AU_0}.  
Let
\begin{equation}\label{RW.def}
\mathcal{R}(W) := 	LW + W\cdot\nabla W ,
\end{equation}
where  $L$ is defined by \eqref{LW.def}.
The weak formulation for $U$ is
\begin{align}\label{perturbed-Leray}
\frac d {ds}(U,f)
&=	
		- (\nabla U,\nabla f) 
		+ (-\al J U   + \alpha Jy \cdot \nabla U +\frac 12 U+\frac 12 y\cdot \nabla U, f)
\\ 
\notag&\quad - (U \cdot\nabla U, f)
		-(W\cdot\nabla U+U\cdot \nabla W,f)-\langle \mathcal{R}(W),f\rangle,
\end{align}
and holds for all $f \in \mathcal V$ and a.e.~$s\in (0,T)$.

We use the Galerkin method as in \cite{BT1}.
Let $\{a_{k}\}_{k\in \N}\subset \mathcal V$ be an orthonormal basis of $H$.
For a fixed $k$, we look for an approximation solution of the form $U_k(y,s)= \sum_{i=1}^k b_{ki}(s)a_i(y)$.
We first prove the existence of and \emph{a priori} bounds for $T$-periodic solutions $b_k=(b_{k1},\ldots,b_{kk})$ to the system of ODEs
\begin{align}\label{eq:ODE}
\frac d {ds} b_{kj} = & \sum_{i=1}^k A_{ij}b_{ki} +\sum_{i,l=1}^k B_{ilj} b_{ki}b_{kl} +C_j,%
\end{align}
for $j\in \{1,\ldots,k\}$,
where
\begin{align}
\notag A_{ij}&=- (\nabla a_{i},\nabla a_j) 
		+ (-\al J a_i + \al Jy \cdot \nb a_i + \frac 12 a_i+\frac y2 \cdot \nabla a_i, a_j) 
\\\notag &\quad\,	 -(a_i\cdot \nabla W,a_j)
		- (W\cdot\nabla a_i, a_j)
\\\notag B_{ilj}&=- (a_i \cdot\nabla a_l, a_j)
\\\notag C_j&=-\langle \mathcal R (W),a_j\rangle.
\end{align}
 
\begin{lemma}[Construction of Galerkin approximations]\label{lemma:Galerkin} Fix $T>0$ and let $W$ satisfy the conclusions of Lemma \ref{lemma:W} with $\de=\frac 14$.
\begin{enumerate}
\item For any $k\in \mathbb N$, the system of ODEs \eqref{eq:ODE} has a $T$-periodic solution $b_{k}\in H^1(0,T)$.
\item Letting
\begin{equation} \notag
U_k(y,s)=\sum_{i=1}^k b_{ki}(s)a_i(y),
\end{equation}we have 
\begin{equation}\label{ineq:uniformink}
||U_k||_{L^\infty (0,T;L^2(\Omega))} + ||U_k||_{L^2(0,T;H^1(\Omega))}<C,
\end{equation}where $C$ is independent of   $k$.
\end{enumerate}
\end{lemma}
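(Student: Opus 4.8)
The plan is to run the standard periodic Galerkin construction, as in \cite{BT1}; the one non-routine point is controlling the cross term involving $\nb W$ by means of the smallness furnished by Lemma~\ref{lemma:W}.

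\emph{Step 1: the energy identity.} First I would multiply the $j$-th equation of \eqref{eq:ODE} by $b_{kj}$ and sum over $j\in\{1,\dots,k\}$. Since $\{a_i\}$ is orthonormal in $H$ and $U_k=\sum_i b_{ki}a_i$, the left-hand side is $\frac12\frac d{ds}\norm{U_k}_{L^2(\Om)}^2$; on the right-hand side I expect the following to vanish: $(JU_k,U_k)=0$ by antisymmetry of $J$; $(Jy\cdot\nb U_k,U_k)=-\frac12\int(\div Jy)\abs{U_k}^2=0$; $(W\cdot\nb U_k,U_k)=-\frac12\int(\div W)\abs{U_k}^2=0$ because $W$ is divergence free; and the cubic contribution $\sum B_{ilj}b_{ki}b_{kl}b_{kj}=-(U_k\cdot\nb U_k,U_k)=0$. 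All these integrations by parts are boundary-free since $U_k(\cdot,s)\in\mathcal V$ is compactly supported in $\Om$. The two zeroth-order terms combine, via $\int y\cdot\nb\abs{U_k}^2=-3\int\abs{U_k}^2$, into $\bke{\tfrac12-\tfrac34}\norm{U_k}_{L^2}^2$, so I would reach
\EQ{\label{plan:energy-id}\tfrac12\tfrac d{ds}\norm{U_k}_{L^2}^2=-\norm{\nb U_k}_{L^2}^2-\tfrac14\norm{U_k}_{L^2}^2-(U_k\cdot\nb W,U_k)-\bka{\mathcal R(W),U_k}.}

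\emph{Step 2: absorbing the remaining terms.} For the transport term I would integrate by parts, $(U_k\cdot\nb W,U_k)=-(U_k\cdot\nb U_k,W)$, bound $\abs{(U_k\cdot\nb U_k,W)}\le\norm{\nb U_k}_{L^2}\norm{U_k}_{L^p}\norm{W}_{L^q}$ with $\frac1p=\frac12-\frac1q$, apply Gagliardo--Nirenberg, $\norm{U_k}_{L^p}\lec\norm{U_k}_{L^2}^{1-3/q}\norm{\nb U_k}_{L^2}^{3/q}$, and use $\norm{W}_{L^q}\le\de=\tfrac14$ from \eqref{Wsmall.est}. Young's inequality, with the split chosen so that the coefficient of $\norm{\nb U_k}_{L^2}^2$ equals $\tfrac12$, then gives $\abs{(U_k\cdot\nb W,U_k)}\le\tfrac12\norm{\nb U_k}_{L^2}^2+\e_0\norm{U_k}_{L^2}^2$, where $\e_0$ is small because $\de$ is small (for $\de=\tfrac14$ one checks $\e_0<\tfrac1{16}$). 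For the source term I would use $\abs{\bka{\mathcal R(W),U_k}}\le\norm{\mathcal R(W)}_{H^{-1}}\norm{U_k}_{H^1}$ and, from \eqref{RW.def}, $\norm{\mathcal R(W)}_{H^{-1}}\le\norm{LW}_{H^{-1}}+\norm{W\cdot\nb W}_{H^{-1}}\le\norm{LW}_{H^{-1}}+\norm{W}_{L^4}^2=:C_\ast$, which is finite uniformly in $s$ by \eqref{LW.est} and \eqref{WL4.est}; Young then absorbs $\tfrac14\norm{\nb U_k}_{L^2}^2+\tfrac18\norm{U_k}_{L^2}^2$ into the left side and leaves a constant. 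Substituting into \eqref{plan:energy-id}, I would obtain, with $\be>0$ and $C_0\ge0$ depending only on $T$ and the quantities of Lemma~\ref{lemma:W} (hence on neither $k$ nor $s$),
\EQ{\label{plan:energy-ineq}\tfrac d{ds}\norm{U_k}_{L^2}^2\le-\norm{\nb U_k}_{L^2}^2-\be\norm{U_k}_{L^2}^2+C_0\qquad\text{on }(0,T).}

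\emph{Step 3: periodic solutions and uniform bounds.} Since $\norm{U_k}_{L^2}^2=\abs{b_k}^2$, \eqref{plan:energy-ineq} gives $\frac d{ds}\abs{b_k}^2\le-\be\abs{b_k}^2+C_0$, which precludes finite-time blow-up; hence the time-$T$ map $P\colon\R^k\to\R^k$, $P(\xi)=b_k(T)$ for the solution of \eqref{eq:ODE} with $b_k(0)=\xi$, is globally defined and continuous. Grönwall gives $\abs{b_k(T)}^2\le e^{-\be T}\abs{\xi}^2+C_0/\be$, so $P$ maps the closed ball $\{\abs{\xi}\le R\}$ into itself once $R^2\ge C_0/(\be(1-e^{-\be T}))$, and Brouwer's theorem produces a fixed point $\xi_\ast$. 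Because the right-hand side of \eqref{eq:ODE} is a polynomial in $b_k$ that is $T$-periodic in $s$, the solution starting at $\xi_\ast$ extends to a $T$-periodic $C^1$ solution on $\R$, so $b_k\in H^1(0,T)$; this proves (1). For this periodic solution, $\norm{U_k(0)}_{L^2}=\norm{U_k(T)}_{L^2}$ together with Grönwall forces $\norm{U_k(0)}_{L^2}^2\le C_0/(\be(1-e^{-\be T}))$ and hence $\norm{U_k}_{L^\infty(0,T;L^2(\Om))}\le C$ with $C$ independent of $k$; integrating \eqref{plan:energy-ineq} over $(0,T)$ and cancelling the equal endpoint values gives $\int_0^T\norm{\nb U_k}_{L^2}^2\,ds\le C_0T$, and together these two bounds are \eqref{ineq:uniformink}.

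\emph{Main obstacle.} The only delicate step is Step~2 — keeping the $\norm{\nb U_k}_{L^2}^2$-coefficient produced by $(U_k\cdot\nb W,U_k)$ strictly below the dissipation while leaving only a small $\norm{U_k}_{L^2}^2$ remainder. This is precisely why the argument uses the revised profile $W$ rather than $U_0$: the $L^q$-smallness \eqref{Wsmall.est}, which fails for $U_0$ itself, is what makes the estimate close. Everything else is routine periodic Galerkin machinery, and here it is even lighter than in \cite{BT1}, since no local energy inequality or pressure bound is involved.
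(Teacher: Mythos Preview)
Your proposal is correct and follows essentially the same route as the paper: test \eqref{eq:ODE} against $U_k$, use the cancellations for the $J$, $Jy\cdot\nb$, $W\cdot\nb$, and cubic terms, absorb $(U_k\cdot\nb W,U_k)$ via the $L^q$-smallness \eqref{Wsmall.est} of $W$ and $\bka{\mathcal R(W),U_k}$ via \eqref{LW.est}--\eqref{WL4.est}, then apply Gr\"onwall, Brouwer, and integrate over a period. The only cosmetic differences are that the paper writes the absorbed cross term simply as $\abs{(U_k\cdot\nb W,U_k)}\le\tfrac18\norm{U_k}_{H^1}^2$ rather than tracking the Gagliardo--Nirenberg/Young split, and defers the global-in-$s$ existence of $b_k$ to \cite{BT1}.
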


\begin{proof} Our proof is nearly identical to \cite[Proof of Lemma 2.6]{BT1}.
Fix $k\in \N$. For any  $U^{0}\in \operatorname{span}(a_1,\ldots,a_k)$, 
there exist $b_{kj}(s)$ uniquely solving \eqref{eq:ODE} with initial value $b_{kj}(0)=(U^{0},a_j)$, and belonging to $H^1(0,\tilde T)$ for some time $0<\tilde T\leq T$. Based on \cite[Proof of Lemma 2.6]{BT1} we may assume $\tilde T = T$.
Multiply the $j$-th equation of \eqref{eq:ODE} by $b_{kj}$ and sum to obtain,
\begin{align} \notag
&\frac 1 2 \frac d {ds} ||U_k||_{L^2}^2 + \frac 1 2 ||U_k||_{L^2}^2+ ||\nabla U_k||_{L^2}^2
\\&\leq - ( U_k\cdot \nabla W, U_k ) - \langle \mathcal{R}(W), U_k\rangle-(\al J U_k  - \alpha Jy \cdot \nabla U_k  ,U_k).\label{ineq:1}
\end{align}

We now bound the right hand side of the above inequality.  From the definition of $J$ we have 
\[
(\al J U_k    - \alpha Jy \cdot \nabla U_k ,U_k)=0.
\]
By \eqref{Wsmall.est} and $\div U_k=0$, we have
\begin{equation}
  \big| ( U_k\cdot \nabla W, U_k )  \big| \leq  \frac 1 8 ||U_k||_{H^1}^2 .
\end{equation}
Recall that $\cR( W)= LW + \div(W\otimes W)$.  Then,
\begin{equation} \label{ineq:2}
|(\cR(  W),U_k)| \le (\norm{LW}_{H^{-1}} +   \|W  \|_{L^4}^2 ) \norm{U_k}_{H^1}  \le C_2+ \frac 1 8 ||U_k||_{H^1}^2 .%
\end{equation}
where $C_{2}=C(\norm{LW}_{H^{-1}} +  \|W  \|_{L^4}^2)^2 $ is independent of $s$, $T$, and $k$.

Using Lemma \ref{lemma:W}, the estimates \eqref{ineq:1}--\eqref{ineq:2} imply
\begin{equation}  \label{ineq:kenergyevolution}
	 \frac d {ds} ||U_k||_{L^2}^2
	 +   \frac 1 2 ||U_k||_{L^2}^2
	 +   \frac 1 2 ||\nabla U_k||_{L^2}^2 \leq C_{2} .
\end{equation}
The Gronwall inequality implies
\begin{equation} \label{ineq:gronwall}
\begin{split}
e^{s/2} ||U_k(s)||_{L^2}^2
&\leq ||U^{0}||_{L^2}^2 +  \int_0^{  T} e^{\tau/2}  C_2 \,dt
\\
& \le  ||U^{0}||_{L^2}^2 + e^{T/2} C_2 T
\end{split}
\end{equation}
for all $s\in [0, T]$.

By \eqref{ineq:gronwall} we can choose $\rho>0$ (independent of $k$) so that 
\begin{equation}\notag
 ||U^{0}||_{L^2}\leq \rho \Rightarrow ||U_{k}(T)||_{L^2}\leq \rho.
\end{equation}
The mapping $\mathcal T:B_\rho^k\to B_\rho^k$ given by $\mathcal T(b_{k}(0))=b_k(T)$, where $ B_\rho^k$ is the closed ball of radius $\rho$ in $\R^k$, is continuous. Thus $\mathcal T$ has a fixed point by the Brouwer fixed-point theorem, i.e.~there exists some $U^{0}\in \operatorname{span}(a_1,\ldots,a_k)$ so that $b_k(0)=b_k(T)$. 

It remains to check that \eqref{ineq:uniformink} holds. The $L^\infty L^2$ bound follows from \eqref{ineq:gronwall}
since $\norm{U^0}_{L^2} \le \rho$, which is independent of $k$.
 Integrating  \eqref{ineq:kenergyevolution} over $ [0,T]$ and using $U_k(0)=U_k(T)$, we get
\begin{equation} \label{eq2.33}
 \frac 1 2 \int_0^T \big(||U_k||_{L^2}^2
+  ||\nabla U_k||_{L^2}^2 \big)\,dt \le C_2 T
\end{equation}
which gives an upper bound for $\| U_k  \|_{L^2(0,T;H^1 )}$ which is uniform in $k$.
\end{proof}

\begin{proof}[Proof of Theorem \ref{thrm.aux}]

By standard arguments, there exists $U\in {L^2(0,T;H_0^1(\R^3))}$ and a subsequence of $\{U_k\}$ (which we still index with $k$) so that 
\begin{align*}
& U_k\rightarrow U \mbox{~weakly in}~L^2(0,T;X),
\\& U_k\rightarrow U  \mbox{~strongly in}~L^2(0,T;L^2(K))  \mbox{~for all compact sets~}K\subset \bar\Omega,
\\& U_k(s)\rightarrow U (s) \mbox{~weakly in}~L^2 \mbox{~for all}~s\in [0,T].
\end{align*}
The weak convergence guarantees that $U (0)=U (T)$, %
and that $U$ satisfies \eqref{perturbed-Leray}.

Let $u=U+W$. Since $W$ and $U$ are $T$-periodic, so is $u$. That $u$ satisfies \eqref{u.eq-weakR} follows from \eqref{perturbed-Leray} and integrating in time.  The \emph{a priori} bounds for $u-W$ extend to bounds for $u-U_0$ since $U_0-W \in L^\infty(0,T;L^2(\Omega))\cap L^2(0,T;H^1(\Omega))$.  This implies that $u$ is a periodic weak solution in the sense of Definition \ref{def:periodicweaksolutionR3}.  
\end{proof}

\section{The non-stationary Stokes system} \label{sec.Stokes}

In this section we study the non-stationary Stokes system in $\Om \in \{ \R^3,\R^3_+\}$. The result will be used to verify Assumption \ref{AU_0} for suitable initial data and $U_0$ defined by \eqref{def:U0} in \S\ref{sec.proof}. The focus is on the case $\Om=\R^3_+$ since the Stokes system is reduced to the heat equation when $\Om=\R^3$.

The non-stationary Stokes system is 
\begin{align}
\begin{array}{ll}\label{eq:Stokes}
 \partial_t v -\Delta v +\nabla \pi  = 0&\mbox{~in~}\Omega \times [0,\infty)
\\  \nabla\cdot v = 0&\mbox{~in~}\Omega\times [0,\infty),
\end{array}
\end{align}
and is required to satisfy the initial condition
\[
v|_{t=0}=v_0,
\]
where $v_0\in L^3_{w,\sigma}(\Omega)$ is given.  
If $\Omega=\R^3_+$ then we augment \eqref{eq:Stokes} with the boundary condition
\begin{align}
\label{eq:Stokes2}
v|_{x_3=0}=0.
\end{align}

Let $S$ denote the solution operator for the Stokes system \eqref{eq:Stokes}--\eqref{eq:Stokes2} on $\Omega$.  If $\Omega=\R^3$ then $S$ is just the solution operator for the heat equation, i.e.,
\[
Sv_0(x,t)=e^{t\Delta}v_0(x) =\int_{\R^3}T(x,y,t)  v_0(y)\,dy,
\]
where
\[
T(x,y,t)=	\frac C { t^{3/2}} \, e^{-\frac{|x-y|^2}{4t}} ,\quad C=(4\pi)^{-3/2}.
\]

If $\Omega=\R^3_+$ the formula for $S$ is more complicated.  In  \cite{Solonnikov}, Solonnikov showed that if $v_0$ is divergence free and vanishes on $\partial \Omega$, then
\[
(Sv_0)_i(x,t)=\int_{\R^3}G_{i,j}(x,y,t)v_{0j}(y)\,dy,
\]
for
\[
G_{i,j}(x,y,t)
=
\delta_{i,j}T(x,y,t) + G_{i,j}^*(x,y,t), 
\]
where 
\begin{align*}
G_{i,j}^*(x,y,t) =&-\delta_{i,j}T(x,y^*,t)
\\&-4(1-\delta_{j,3})\frac \partial{\partial x_j}\int_{\R^2\times [0,x_3]}\frac \partial {\partial x_i} E(x-z)T(z,y^*,t)\,dz,
\end{align*}
and $E(x)=(4\pi |x|)^{-1}$ is the fundamental solution to the Laplace equation on $\R^3$, and for a given $y=(y_1,y_2,y_3)\in \R^3$ we denote 
\[
y' = (y_1, y_2),\quad y^*=(y_1,y_2,-y_3).
\]
Moreover, $G_{ij}^*$
satisfies the pointwise bound (\cite[(2.38)]{Solonnikov}
\begin{equation}
\label{Solonnikov.est}
|\pd_t^ s D_x ^m D_y^\ell G_{ij}^*(x,y,t)|\lec t^{-s - \ell_3/2}
(\sqrt t+x_3)^{-m_3} (\sqrt t+|x-y^*|)^{-3- |m'|- |\ell'|} e^{-\frac{c
    y_3^2}t}
\end{equation}
for all $s\in \N_0=\bket{0,1,2,\ldots}$ and $m,\ell \in \N_0^3$. Note that the $L^p\to L^q$ bound of $S(t)$ in \cite{Ukai} is not sufficient for our purpose.

We first observe that, when $v_0$ is RDSS for given factor $\la>1$ and phase $\phase \in \R$  (i.e., $v_0$ satisfies \eqref{v0-RDSS}),
$Sv_0$ is RDSS for the same values of $\la$ and $\phi$, (i.e., $Sv_0$ satisfies \eqref{v-RDSS}). To see this in the whole space case let $\xi=\la R(\phi) y$.  We have $|x-y|^2/(4t) = |\la R(\phi)x -\xi|^2/(4\la^2 t)$ and  $d\xi =\la^3 dy$.  Therefore,
\begin{align*}
Sv_0(x,t)&= \lambda R(-\phi ) \int_{\RR^3}\frac C {(\lambda^2 t)^{3/2}}e^{|\lambda R(\phi)x-\xi|^2/(4\lambda^2t)} v_0(\xi)\,d\xi
\\&= \lambda R(-\phi ) Sv_0(R(\phi)\lambda x,\lambda^2t).
\end{align*}For the half space case note that the above scaling goes through if $T(x,y,t)$ is replaced by $T(x,y^*,t)$.  For the remaining part of $G_{i,j}^*$ we carry out a change of variables letting $\chi=\la R(\phi)x$, $\xi=\la R(\phi) y$, and $\zeta=\la R(\phi)z$ and obtain
\begin{align*}
&\int_{\R^3_+}\frac \partial{\partial x_j}\int_{\R^2\times [0,x_3]}\frac \partial {\partial x_i} E(x-z)T(z,y^*,t)\,dz\, v_{0j}(y) \,dy
\\&=C\lambda R(-\phi) \int_{\R^3_+} \frac \partial{\partial x_j} \int_{\R^2\times [0,x_3]}  \frac {\partial}{ \partial x_i} \frac 1 {|x-z|} \frac 1 {(\la^2 t)^{3/2}} e^{-|z-y^*|^2/4t} v_{0j}(\xi)\,dz\,d\xi
\\&=C\lambda R(-\phi)\int_{\R^3_+} \frac \partial{\partial \chi_j} \int_{\R^2\times [0,\chi_3]}  \frac {\partial}{ \partial \chi_i} \frac 1 {|\chi-\xi|} \frac 1 {(\la^2t)^{3/2}} e^{-|\zeta-\xi^*|^2/4\la^2 t} v_{0j}(\xi)\,d\zeta\,d\chi.
\end{align*}
Thus, $Sv_0$ also satisfies the desired scaling \eqref{v-RDSS} when $\Om =\R^3_+$.

If $v_0$ is RSS with angular speed $\al$ (i.e., $v_0$ satisfies \eqref{v0-RSS}), then it satisfies \eqref{v0-RDSS} for any $\la>1$ with $\phi = 2 \al \log \la$. By the above, $Sv_0$ is also RDSS with the same $\la$ and $\phi$. Thus $Sv_0$ is RSS and
satisfies \eqref{v-RSS} with the same $\al$.

\medskip
  
In \cite[Lemma 3.1]{BT1}, functions in $L^3_w(\R^3)\cap \mathrm{DSS}$ are shown to belong to $L^3_{loc}(\R^3\setminus \{ 0\})$.  The next lemma extends this to RDSS functions on the whole or half space.

\begin{lemma}\label{lemma:equivalence}Assume  $\Omega\in \{\R^3,\R^3_+  \}$.
If  $f$ is defined in $\Omega$ and is RDSS with factor $\la>1$ and phase $\phase \in \R$, then  $f \in L^3_{loc}(\Omega \setminus \{0\})$ if and only if $f \in L^{3}_w(\Omega)$. 
\end{lemma}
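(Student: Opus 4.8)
The plan is to reduce everything to the fact that both $L^3$ and weak $L^3$ are scaling critical, using that the maps appearing in the RDSS relation are rotations about the $x_3$-axis composed with dilations, hence measure-preserving isometries of $\R^3$ that also preserve $\R^3_+$, and that they carry the cone $\Om$ into itself.

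First I would set $A_k=\{x\in\Om:\la^k\le|x|<\la^{k+1}\}$ for $k\in\ZZ$, so $\Om\setminus\{0\}=\bigsqcup_{k\in\ZZ}A_k$ up to the null set $\{0\}$. Iterating \eqref{v0-RDSS} yields, for $x\in A_k$ and $z:=\la^{-k}R(-k\phase)x\in A_0$, the identity $f(x)=\la^{-k}R(k\phase)f(z)$. Since $R(k\phase)\in O(3)$ and $z\mapsto\la^kR(k\phase)z$ maps $A_0$ bijectively onto $A_k$ with constant Jacobian $\la^{3k}$, one gets $\int_{A_k}|f|^3=\int_{A_0}|f|^3=:I$ for every $k$, and, setting $m_0(t):=|\{z\in A_0:|f(z)|>t\}|$, the scaled level sets $|\{x\in A_k:|f|>s\}|=\la^{3k}m_0(\la^k s)$ sum to
\[
s^3 m(f,s)=\sum_{k\in\ZZ}h(\la^k s),\qquad h(t):=t^3 m_0(t),\qquad s>0.
\]

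Both implications then come from this identity together with the layer-cake formula $I=3\int_0^\infty s^2 m_0(s)\,ds=3\int_0^\infty h(s)\,s^{-1}\,ds$ and an interchange of sum and integral. For $L^3_w\Rightarrow L^3_{loc}$: splitting $\int_0^\infty h(s)s^{-1}\,ds$ over the intervals $[\la^k,\la^{k+1})$ and substituting $s=\la^k u$ turns it into $\int_1^\la u^{-1}\sum_k h(\la^k u)\,du=\int_1^\la u^2 m(f,u)\,du\le(\log\la)\,\|f\|_{L^3_w(\Om)}^3$, so $I<\infty$; since any bounded subset of $\overline\Om$ away from the origin meets only finitely many $A_k$, this gives $f\in L^3_{loc}(\Om\setminus\{0\})$. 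For the converse, knowing $I<\infty$ I would use that $t\mapsto m_0(t)$ is nonincreasing, hence $h(t)\ge\la^{-3}h(\la^k s)$ for $t\in[\la^{k-1}s,\la^k s]$; integrating against $dt/t$ and summing over $k$ bounds $s^3 m(f,s)=\sum_k h(\la^k s)$ by $\tfrac{\la^3}{3\log\la}\,I$ uniformly in $s>0$, i.e.\ $f\in L^3_w(\Om)$. Along the way one records the trivial facts that $\{0\}$ is Lebesgue-null and that membership in $L^3_w$ (resp.\ local $L^3$) forces $f$ finite a.e., so $m_0(t)\to0$ as $t\to\infty$ and the layer-cake identity applies.

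There is no genuinely hard step in the whole-space case; the main point of care is the half space, where $\overline{A_0}$ touches $\partial\R^3_+$. There the statement ``$f\in L^3_{loc}(\R^3_+\setminus\{0\})$'' has to be read as $L^3$-integrability over bounded subsets of $\overline{\R^3_+}$ away from the origin (the $L^p_{loc}(\overline\Om)$ convention already used in Assumption \ref{AU_0}); with the naive open-set reading the converse implication fails, as one sees by extending by RDSS a profile that behaves like $x_3^{-1/2}$ near $A_0\cap\partial\R^3_+$. So the only real work beyond the bookkeeping is to fix this interpretation and note that $\overline{A_0}$ is then an admissible test set, which makes $I<\infty$ equivalent to $f$ being locally $L^3$.
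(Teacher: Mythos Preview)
Your argument is correct and follows exactly the paper's approach: the same annular decomposition $A_k$ and the same distribution-function identity $m(\beta,f)=\sum_k \lambda^{3k}m_0(\lambda^k\beta)$, which the paper records as \eqref{eq3.2} and then defers to \cite[Proof of Lemma 3.1]{BT1} for the remaining layer-cake manipulations you carry out in full. Your remark on the half-space interpretation of $L^3_{loc}(\Omega\setminus\{0\})$ (closed up to $\partial\R^3_+$, as in Assumption~\ref{AU_0}) is a genuine clarification the paper leaves implicit but relies on in Lemmas~\ref{lemma:V0bound}--\ref{lemma:V0decay}.
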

\begin{proof} 
Recall
the distribution function for $f$ is
\[
m(\si,f) = |\{ x\in \Omega: |f(x)| > \si \}|,
\]
and the well known identity
\[
\int_\Omega |f|^p \,dx = p \int_0^\I \si^{p} m(\si,f) d \si/\si,
\]
which holds for $1\le p < \I$. Let $A_r = \bket{x \in \Om: r \le |x| < r \lambda }$, and
decompose $f$ as $f = \sum_{k \in \mathbb{Z}} f_k$ where $f_k(x) = f(x)$ if $x \in A_{\lambda^k }$ and $f_k(x)=0$ otherwise. For $\beta>0$, 
\EQ{\label{eq3.2}
m(\beta, f) &= \sum_{k \in \mathbb{Z}} m(\beta,f_k)
= \sum_{k \in \mathbb{Z}} m(\lambda^k \beta,f_0) \lambda^{3k},
}
where we have used the scaling property 
\EQ{
f_k (x) = \lambda^{-k}R(k \phi) f_0(\lambda^{-k}R(-k \phi) x),
}
since $f$ is RDSS. This says rotational corrections do not effect the scaling of the distribution functions $m$ when compared to the DSS case considered in  \cite[Lemma 3.1]{BT1}. Indeed, \eqref{eq3.2} matches \cite[Equation (3.2)]{BT1} and the ensuing steps from \cite[Proof of Lemma 3.1]{BT1} apply.  We thus omit the remaining details.  
\end{proof}

The next several lemmas concern the integrability and decay of solutions to the Stokes system where the initial data is RDSS and belongs to $L^3_{w,\si}(\Om)$.

\begin{lemma}\label{lemma:V0bound} Assume  $\Omega\in \{\R^3,\R^3_+  \}$.
Suppose  {$v_0 \in L^3_{w,\sigma}(\Omega)$ is  RDSS with factor $\la>1$ and phase $\phase \in \R$.}  Then, for all $s\in\N_0$ and $m \in \N_0^3$, and for all $R>0$, we have
\begin{align} \label{eq:lemma3.2}
\sup_{t\geq 1} \| \partial_t^s \nb^m_x Sv_0 \|_{L^\I(\Omega\cap B_R)} <\I.
\end{align}
\end{lemma}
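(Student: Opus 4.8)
The plan is to use the self‑similar scaling of $Sv_0$ to reduce the bound to a compact interval of times, and then to estimate the Stokes kernel directly against the weak‑$L^3$ datum. The delicate point will be the borderline decay of the kernel at spatial infinity.

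\emph{Reduction.} Since $v_0$ is RDSS with factor $\la>1$ and phase $\phase$, the computation preceding Lemma~\ref{lemma:equivalence} shows $Sv_0$ satisfies the same scaling, so $Sv_0(x,t)=\la^{k}R(-k\phase)Sv_0(\la^{k}R(k\phase)x,\la^{2k}t)$ for all $k\in\ZZ$. Differentiating this in $x$ and $t$ and using that $R(\cdot)$ acts isometrically on every tensor slot gives $\abs{\pd_t^{s}\nb_x^{m}Sv_0(x,t)}=\la^{k(1+|m|+2s)}\abs{(\pd_t^{s}\nb_x^{m}Sv_0)(\la^{k}R(k\phase)x,\la^{2k}t)}$. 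Given $t\ge1$ I would choose $k\le0$ with $\la^{2k}t\in[1,\la^{2}]$, so that $\la^{k(1+|m|+2s)}\le1$ and $\la^{k}R(k\phase)x\in\Om\cap B_R$ (as $\abs{\la^{k}R(k\phase)x}=\la^{k}|x|\le|x|$). Then $\sup_{t\ge1}\norm{\pd_t^{s}\nb_x^{m}Sv_0}_{L^\infty(\Om\cap B_R)}\le\sup_{\tau\in[1,\la^{2}]}\norm{\pd_t^{s}\nb_x^{m}Sv_0}_{L^\infty(\Om\cap B_R)}$, reducing matters to a bound on the compact set $(\Om\cap B_R)\times[1,\la^{2}]$ (and we may take $R\ge1$).

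\emph{Kernel estimate.} Write $Sv_0(x,t)=\int_\Om G(x,y,t)v_0(y)\,dy$ with $G_{ij}=\delta_{ij}T+G^{*}_{ij}$. For $(x,t)$ in the compact set above, the Gaussian bounds on $\pd_t^{s}\nb_x^{m}T$ and \eqref{Solonnikov.est} with $\ell=0$ on $\pd_t^{s}D_x^{m}G^{*}$ decay fast enough in $y$ to justify differentiating under the integral, so $\pd_t^{s}\nb_x^{m}Sv_0(x,t)=\int_\Om \pd_t^{s}\nb_x^{m}G\,v_0\,dy$, and I would split $\Om=(\Om\cap B_{2R})\cup(\Om\setminus B_{2R})$. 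On $\Om\cap B_{2R}$ both kernels are bounded by $C(R,s,m,\la)$ (in \eqref{Solonnikov.est}, $\sqrt t\ge1$ and $x_3\le R$ make every factor $\le1$ apart from the prefactor), and $v_0\in L^{1}(\Om\cap B_{2R})$ with $\norm{v_0}_{L^1(\Om\cap B_{2R})}\lesssim R^{2}\norm{v_0}_{L^3_w}$ by the elementary embedding $L^3_w\hookrightarrow L^1$ on finite‑measure sets, so this piece is $\lesssim_{R,s,m,\la}\norm{v_0}_{L^3_w}$. On $\Om\setminus B_{2R}$ one has $|y|\ge2R\ge2\ge2|x|$, hence $|x-y|\ge|y|/2$ and $\abs{x-y^{*}}=\abs{x^{*}-y}\ge|y|-|x|\ge|y|/2$; since $1\le\sqrt t\le\la$ and Gaussians decay faster than any power, while $t^{-s}(\sqrt t+x_3)^{-m_3}\le1$ in \eqref{Solonnikov.est}, both $\abs{\pd_t^{s}\nb_x^{m}T(x,y,t)}$ and $\abs{\pd_t^{s}D_x^{m}G^{*}(x,y,t)}$ are $\lesssim_{R,s,m,\la}|y|^{-3}$ there, so this piece is $\lesssim\int_{\Om\setminus B_{2R}}|y|^{-3}\abs{v_0(y)}\,dy\lesssim R^{-1}\norm{v_0}_{L^3_w}$. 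Combining gives $\norm{\pd_t^{s}\nb_x^{m}Sv_0}_{L^\infty((\Om\cap B_R)\times[1,\la^{2}])}\lesssim_{R,s,m,\la}\norm{v_0}_{L^3_w}<\infty$, which together with the reduction proves the lemma.

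The step I expect to be the main obstacle is the last inequality, $\int_{\Om\setminus B_{2R}}|y|^{-3}\abs{v_0(y)}\,dy\lesssim R^{-1}\norm{v_0}_{L^3_w}$, which reflects the behaviour of the Stokes kernel at spatial infinity: while the heat kernel decays exponentially there, the Solonnikov correction $G^{*}$ decays only like $|y|^{-3}$, which is exactly critical against $L^3_w$ data and not absolutely integrable on all of $\R^3$. The bound survives only because the integration is over the exterior region $\{|y|\ge2R\}$: decomposing into the annuli $A_{\la^k}=\{\la^k\le|y|<\la^{k+1}\}$, on which $|y|^{-3}\sim\la^{-3k}$ while $\norm{v_0}_{L^1(A_{\la^k})}=\la^{2k}\norm{v_0}_{L^1(A_1)}$ by the RDSS scaling (with $\norm{v_0}_{L^1(A_1)}<\infty$ since $A_1$ has finite measure), the sum over $k$ with $\la^k\gtrsim R$ is a convergent geometric series — equivalently, $\mathbf 1_{\{|y|\ge2R\}}|y|^{-3}\in L^{3/2,1}$ pairs with $v_0\in L^{3,\infty}=L^3_w$. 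The possibly genuine singularity of $v_0$ at the origin is by contrast harmless, since for $t\ge1$ the kernels are uniformly bounded near $y=0$.
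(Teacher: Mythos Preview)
Your proof is correct and follows essentially the same strategy as the paper's: Solonnikov's pointwise bound \eqref{Solonnikov.est} on the kernel, a near/far splitting of the $y$-integral, and an RDSS annulus decomposition to sum the far piece as a geometric series. The only cosmetic differences are that you first reduce to $t\in[1,\la^2]$ via the RDSS scaling of $Sv_0$ (the paper instead observes that the kernel bound $\lesssim(1+|x-y|)^{-3}$ already holds uniformly for all $t\ge1$), and that you pair the kernel with $v_0$ in $L^\infty$--$L^1$ near and $|y|^{-3}$ against $\norm{v_0}_{L^1(A_{\la^k})}$ far, whereas the paper uses $L^2$--$L^2$ near and $L^{3/2}$--$L^3$ on the annuli via Lemma~\ref{lemma:equivalence}.
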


\begin{proof}
This is trivial in the whole space -- see \cite{BT1}.  Assume $\Om=\R^3_+$.  
 
Let $A_{\lambda^k}=\{x\in \Om: \la^k\leq |x|<\la^{k+1}  \} $.  Assume $x\in B_{R}$ for some $R>1$.  Choose $k_0$  sufficiently large so that $2R< \la^{k_0}$.
 Recall that 
\[
(Sv_0)_i(x,t) = \int_{\R^3_+}  G_{i,j}(x,y,t)v_{0j}(y)  \,dy,
\]
where $G_{i,j}= -\delta_{i,j} T(x,z,t) + G_{i,j}^*$.
By the Solonnikov estimate \eqref{Solonnikov.est}, we have
\begin{align}\label{ineq.Sol}
\sup_{t>1} \big|\partial_t^s \nb^m_x G_{i,j}(x,y,t) \big| \le \frac {C_{s,m}} {(1+|x-y|)^{3}}.
\end{align}

Denote $g(y) = \partial_t^s \nb^m_x G_{i,j}(x,y,t)$. 
For $k\geq k_0$ and $y \in A^{\la^k}$, we have $\big| g(y) \big| \lesssim |y|^{-3}$ and
\[
\int_{A^{\la^k}}| g(y)  |^{3/2}\,dy \lesssim \la^{-3k/2}.
\]
Now, 
\begin{align*}
\bigg|\int_{\R^3_+}  g(y)v_{0j}(y) \,dy \bigg|
&\leq 
\|g\|_{L^{2}(B_{\la^{k_0}})} \| v_0\|_{L^{2}(B_{\la^{k_0}})} 
\\&\quad +
\sum_{k \ge k_0}  \|g\|_{L^{3/2}(A_{\la^k})} \| v_0\|_{L^3(A_{\la^k})}
\\&\leq \|g\|_{L^{2}(B_{\la^{k_0}})} \| v_0\|_{L^{2}(B_{\la^{k_0}})} 
 +C \|v_0\|_{L^3(A_{1})} \sum_{k \ge k_0} \la^{-k},
\end{align*}
which is finite by Lemma \ref{lemma:equivalence} because $v_0\in L^3_w$.  
The above shows \eqref{eq:lemma3.2}.
\end{proof}

\begin{lemma}\label{lemma:V0decay}Assume  $\Omega\in \{\R^3,\R^3_+  \}$.
Suppose  {$v_0 \in L^3_{w,\sigma}(\Omega)$ is  RDSS with factor $\la>1$ and phase $\phase \in \R$.} Then,  
 \[
 \label{w1.decay}
 \sup_{1\leq t\leq \lambda^2}\norm{Sv_0(t)}_{L^{q}(|x|>r)}\leq \Theta(r),
 \]
for any $q \in (3,\I]$ and $t\in [1,\lambda^2]$ where
$\Theta:[0,\infty)\to [0,\infty)$ depends on $q$ but satisfies $\Theta(r)\to 0$ as $r\to\infty$.
\end{lemma}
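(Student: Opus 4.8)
The plan is to reduce everything to the endpoint $q=\infty$ and recover $q\in(3,\infty)$ by interpolation. For a measurable $E\subset\Omega$ and $q\in(3,\infty)$ one has the elementary bound
\[
\|f\|_{L^q(E)}\le c_q\,\|f\|_{L^{3,\infty}(E)}^{3/q}\,\|f\|_{L^\infty(E)}^{1-3/q},
\]
proved by writing $\|f\|_{L^q(E)}^q=q\int_0^{\|f\|_{L^\infty(E)}}\sigma^{q-1}|\{|f|>\sigma\}\cap E|\,d\sigma$ and using $|\{|f|>\sigma\}\cap E|\le\sigma^{-3}\|f\|_{L^{3,\infty}(E)}^3$. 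Now $Sv_0$ is RDSS with the same $\la,\phase$ (shown above), and by Lemma~\ref{lemma:V0bound} it is bounded on $\Omega\cap B_R$ for every $R$ when $t\ge1$; hence $Sv_0(t)\in L^3_{loc}(\Omega\setminus\{0\})$, so by Lemma~\ref{lemma:equivalence} $Sv_0(t)\in L^{3,\infty}(\Omega)$ with $\sup_{t\in[1,\la^2]}\|Sv_0(t)\|_{L^{3,\infty}(\Omega)}\le c<\infty$. Thus, once the $q=\infty$ statement $\sup_{t\in[1,\la^2]}\|Sv_0(t)\|_{L^\infty(\Omega\setminus B_r)}\le\Theta_\infty(r)$, $\Theta_\infty(r)\to0$, is known, applying the inequality with $E=\Omega\setminus B_r$, $f=Sv_0(t)$ gives the lemma for all $q\in(3,\infty)$ with $\Theta=c_qc^{3/q}\Theta_\infty^{\,1-3/q}$.

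For $q=\infty$, write $A_{\la^k}=\{x\in\Omega:\la^k\le|x|<\la^{k+1}\}$ and fix $k_0$ with $\la^{k_0}\le r$, so $\Omega\setminus B_r\subset\bigcup_{k\ge k_0}A_{\la^k}$. Iterating \eqref{v-RDSS} gives $Sv_0(x,t)=\la^{-k}R(k\phase)\,Sv_0\bigl(\la^{-k}R(-k\phase)x,\la^{-2k}t\bigr)$, and since $\la^{-k}R(-k\phase)$ maps $A_{\la^k}$ onto $A_1$,
\[
\|Sv_0(t)\|_{L^\infty(A_{\la^k})}=\la^{-k}\,\|Sv_0(\la^{-2k}t)\|_{L^\infty(A_1)} .
\]
Because $\la^{-2k}t\le\la^{2-2k}\to0$, it suffices to prove $\|Sv_0(\tau)\|_{L^\infty(A_1)}=o(\tau^{-1/2})+O(1)$ as $\tau\to0^+$, the $O(1)$ depending only on $\la,\phase,\|v_0\|_{L^{3,\infty}}$: granting this, $\la^{-k}\|Sv_0(\la^{-2k}t)\|_{L^\infty(A_1)}=\varepsilon_k+O(\la^{-k})$ with $\varepsilon_k\to0$ as $k\to\infty$ uniformly in $t\in[1,\la^2]$ (the $o$ is uniform since $\la^{-2k}t\le\la^{2-2k}$), and taking the supremum over $k\ge k_0$ gives $\Theta_\infty(r)\to0$.

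To bound $\|Sv_0(\tau)\|_{L^\infty(A_1)}$, split $v_0=\tilde v+(v_0-\tilde v)$ with $\tilde v:=v_0\mathbf 1_{\{\la^{-2}\le|\cdot|<\la^2\}}\in L^3(\Omega)$ (Lemma~\ref{lemma:equivalence}). For the tail, supported where $|y|<\la^{-2}$ or $|y|\ge\la^2$, one has $|\xi-y|,|\xi-y^*|\ge1-\la^{-2}$ in the first region and $\gtrsim|y|$ in the second (for $\xi\in A_1$); feeding this into the Gaussian factors of $T(\xi,y,\tau)$ and $T(\xi,y^*,\tau)$ and into the $(\sqrt\tau+|\xi-y^*|)^{-3}e^{-cy_3^2/\tau}$ bound for the remaining part of $G^*$ from \eqref{Solonnikov.est}, decomposing dyadically in $|y|$ and using the self-similar scaling of $v_0$ together with $\|v_0\|_{L^1(B_\rho)}\lesssim\rho^2\|v_0\|_{L^{3,\infty}}$, bounds this contribution by $O(1)$. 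For $S(\tau)\tilde v$, split once more $\tilde v=\tilde v\mathbf 1_{|\tilde v|\le M}+\tilde v\mathbf 1_{|\tilde v|>M}$; by the $L^p$–$L^q$ smoothing of the Stokes semigroup (Solonnikov's bounds \eqref{Solonnikov.est}, cf.~\cite{Ukai}; just the heat semigroup if $\Omega=\R^3$), $\|S(\tau)(\tilde v\mathbf 1_{|\tilde v|\le M})\|_{L^\infty}\lesssim\tau^{-3/(2p)}\|\tilde v\mathbf 1_{|\tilde v|\le M}\|_{L^p}$ for any fixed $p>3$, so $\tau^{1/2}$ times it is $O(\tau^{1/2-3/(2p)})\to0$, while $\|S(\tau)(\tilde v\mathbf 1_{|\tilde v|>M})\|_{L^\infty}\lesssim\tau^{-1/2}\|\tilde v\mathbf 1_{|\tilde v|>M}\|_{L^3}$, whose $L^3$ factor tends to $0$ as $M\to\infty$. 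Hence $\tau^{1/2}\|S(\tau)\tilde v\|_{L^\infty}\to0$, i.e.\ $\|S(\tau)\tilde v\|_{L^\infty}=o(\tau^{-1/2})$, which closes the argument.

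The step I expect to be the main obstacle is this last one. The smoothing $\|S(\tau)\|_{L^3\to L^\infty}\sim\tau^{-1/2}$ (and $\|S(\tau)\|_{L^3\to L^q}\sim\tau^{-(1/2-3/(2q))}$ for $q<\infty$) is scale-critical, so using it crudely makes the sum $\sum_k\la^{(3-q)k}\|Sv_0(\la^{-2k}t)\|_{L^q(A_1)}^q$ of the annular contributions diverge; the saving is the genuine $o(\cdot)$ improvement coming from the fact that $v_0$ is \emph{honestly} in $L^3$ on annuli away from the origin (equivalently $\tilde v\in\dot B^{3/q-1}_{q,q}$ when $q<\infty$), which the truncation at level $M$ extracts. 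Routing through $q=\infty$, where one takes a supremum rather than a sum, is precisely what lets this improvement suffice; the dyadic bookkeeping for the far annuli and for the non-local part of the half-space kernel near $\{x_3=0\}$ is then routine given \eqref{Solonnikov.est} and self-similarity.
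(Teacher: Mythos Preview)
Your argument is correct and takes a genuinely different route from the paper's. Both proofs reduce to $q=\infty$ and then interpolate with the uniform $L^{3}_w$ bound, and both ultimately exploit that $v_0$ lies in honest $L^3$ on annuli (not merely $L^3_w$). The difference is where the work happens: the paper stays at $t\in[1,\la^2]$ and estimates $\|Sv_0(t)\|_{L^\infty(A_r)}$ directly for large $r$, splitting the $y$-integral into near, middle and far regions (eight pieces $I_0^r,\dots,J_3^r$ in the half-space) with an auxiliary parameter $\gamma$; the local $L^3$ integrability of $v_0$ enters through moduli $\omega_1(\gamma),\omega_2(\gamma)\to0$. You instead use RDSS scaling to pull $A_{\la^k}$ back to $A_1$, turning the large-$|x|$ problem into a short-time problem, and then extract the needed $o(\tau^{-1/2})$ from $\tilde v\in L^3$ via level-set truncation and $L^p\to L^\infty$ smoothing of the kernel. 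Your approach is cleaner conceptually and makes the role of the scale-critical endpoint very transparent; the paper's approach is more hands-on with the half-space Green function and avoids invoking semigroup smoothing on non-solenoidal truncations (your $S(\tau)$ applied to $\tilde v\mathbf 1_{|\tilde v|\le M}$ must be read as the kernel integral, with the $L^p\to L^\infty$ bound deduced from \eqref{Solonnikov.est} rather than from semigroup theory on $L^p_\sigma$, but this is harmless).
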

\begin{proof}If $\Omega=\R^3$ then the proof of \cite[Lemma 3.2]{BT1} applies here with only superficial changes and we omit the details.  We thus focus on the half space case.  

Assume $\Omega=\R^3_+$ and $v_0$ is as in the statement of the lemma.  
By Lemma \ref{lemma:equivalence} we have $v_0\in L^3_{loc}(\Omega\setminus \{0\})$. Let
\[
\om_1(l) = \sup_{1<|x_0|<\la} \bigg(\int_{B(x_0,l)\cap \Omega} |v_0(x)|^3\,dx\bigg)^{1/3},
\]
and
\[
\om_2(l) = \bigg(\int_{x\in \Omega;x_3<l;1\leq |x|\leq \la} |v_0(x)|^3\,dx\bigg)^{1/3}.
\]
Clearly $\om_1(l),\om_2(l) \to 0$ as $l \to 0$.

Let $A_r=\{ x\in \Omega : r\leq |x| <\lambda r \}$ and $A_r^*=\{ z: r/2\leq |z| <2\lambda r \}$.  We first estimate $\|Sv_0(t)\|_{L^q(A_r)}$.  Write  
\EQ{
 &(Sv_0)_i (x,t)
\\&= \bigg\{
		\int_{|y|< r/2} 
		+\int_{y\in A_r^*;|x-y|<\gamma r}
		+\int_{y\in A_r^*;|x-y|\geq \gamma r}  
		+\int_{2\lambda r\leq |y| }
		 \bigg\}
	\delta_{i,j}T(x,y,t) v_{0j}(y) \,dy
\\&~+
\bigg\{
		\int_{|y|< r/2} 
		+\int_{y\in A_r^*;y_3<\gamma r}
		+\int_{y\in A_r^*;y_3\geq \gamma r}  
		+\int_{2\lambda r\leq |y| }
		 \bigg\}
	G_{i,j}^*(x,y,t) v_{0j}(y) \,dy
\\&= I_0^r(x,t)+I_1^r(x,t)+I_2^r(x,t)+I_3^r(x,t)
\\&~  +J_0^r(x,t)+J_1^r(x,t)+J_2^r(x,t)+J_3^r(x,t),}
where $0< \gamma \ll 1$ is an as-of-yet unspecified parameter. We suppress the indexes $i$ and $j$ since they play no role in our estimates.

Fix $r>1$ and $(x,t)\in A_r\times [1,\lambda^2]$. Using the formula for $T$ it is easy to see that
\[
 |I_0^r(x,t)|+|I_3^r(x,t)|\lesssim e^{-cr^2}.
\]
We have by H\"older's inequality that
\[
|I_1^r(x,t)| \le C \norm{e^{-cx^2}}_{L^{3/2}(\R^3)} \norm{ v_0}_{L^3(B(x,\gamma r))} .
\]
By re-scaling $v_0$ we have 
\[
 \norm{ v_0}_{L^3(B(x,\gamma r))}\leq  \om_1(\gamma),
\]
and, therefore,
\[
|I_1^r(x,t)| \le C \om_1(\gamma).
\]
Also, by H\"older's inequality we have
\EQ{
|I_2^r(x,t)| & \le C \int _{A_r^*} e^{-c \gamma^2 r^2}  | v_0(z)| \,dz 
\\
&\le C e^{-c \gamma^2 r^2}  \norm{ v_0}_{L^3(A_r^*)}  \norm{ 1}_{L^{3/2}(A_r^*)} 
\le C e^{-c \gamma^2 r^2}  r^2.
}

The remaining integrals will be bounded using 
Solonnikov estimate \eqref{Solonnikov.est} with $s=|m|=|\ell|=0$,
\begin{equation}\label{Solon2}
\sup_{t \ge 1} | G_{ij}^*(x,y,t)|\lec \frac { e^{-{c  y_3^2}}}
{(1+x_3+y_3+|x'-y'|)^{3}}.
\end{equation}
We first bound $J_0^r(x)$. Since $|y|<r/2$ and $|x|\ge r$, we have
\EQ{\label{eq3.10}
x_3 +y_3+ |x'-y'| \gtrsim r.
}
Thus $|G^*_{i,j}(x,y,t)| \lesssim r^{-3}$ in the integrand of $J_0^r$. 
It follows that 
\[
 |J_0^r(x)|\leq \frac C {r^3} \int_{B_{r/2}(0)} |v_0(y)|\,dy
=  \frac C {r^3} \sum_{k=1}^\infty  \norm{v_0}_{L^1(A_{r/(2\la^k)})} .
\]
Note that
\[
\norm{v_0}_{L^1(A_{r/(2\la^k)})} \le C r^{2}  \norm{v_0}_{L^3(A_{r/(2\la^k)})} = Cr^{2},
 \]
 independent of $k$,
using the fact that $v_0$ is RDSS.
Thus
\[
 |J_0^r(x)|\lesssim r^{-1}.
\]

Similarly, for $J_3^r(x)$,  since $|y|>2\la r$ and $|x|\le \la r$, we also have
\eqref{eq3.10}. Thus $|G^*_{i,j}(x,y,t)| \lesssim r^{-3}$ in the integrand of $J_3^r$, and 
\[
 |J_3^r(x)|\leq \frac C {r^3} \int_{B_{2\la r}^C(0)} |v_0(y)|\,dy
=  \frac C {r^3} \sum_{k=1}^\infty  \norm{v_0}_{L^1(A_{2\la^k r})} 
\lesssim r^{-1}.
\]

To bound $J_1^r$ note that
\[
\frac {e^{-cy_3^2}} {(1+x_3+|x'-y'|)^3} \leq \frac {e^{-cy_3^2}} {(1+|x'-y'|)^3}\in L^{3/2}(\Omega),
\]
uniformly in $x \in A_r$.
Then, by 
\eqref{Solon2} 
and H\"older's inequality,
\[
|J_1^r(x)|\leq \bigg\| \frac {e^{-cy_3^2}} {(1+x_3+|x'-y'|)^3} \bigg\|_{L^{3/2}} \int_{y\in A_r^*;y_3<\gamma r}|v_0|^3\,dy \lesssim \omega_2(\gamma),
\]
where we have re-scaled $v_0$ to obtain the last inequality.

For $J_2^r$ we have by H\"older's inequality, the fact that $y_3\geq \gamma r$, and re-scaling that
\[
|J_2^r(x)|\lesssim e^{-c \gamma^2 r^2} r^2 \|  v_0\|_{L^3(A_1^*)}.
\]

Taken together, these estimates show that for $r>1$
\EQ{
\norm{Sv_0(t)}_{L^\I(A_r)} \le  C \om_1(\gamma) + C\om_2(\gamma) + C e^{-c \gamma^2 r^2}  r^2+ Ce^{-c r^2} + C r^{-1},
}
where the constants are independent of $r$ and $\gamma$. 
The above inequality is still valid if  $\lambda^kr$ replaces $r$ for $k\in \N$, indeed we have 
\EQ{
\norm{Sv_0}_{L^\I(A_{\la^kr})} \le  C \om_1(\ga)+C \om_2(\ga) + C e^{-c \ga^2 (\la^kr)^2}  (\la^kr)^2+ Ce^{-c (\la^kr)^2} + C \la^{-k}r^{-1}.
}
The right hand side is decreasing in $k$ for fixed $\ga$ and $r$ and we conclude that
\[
 \norm{Sv_0}_{L^\infty({|x|\geq r})}\leq 
\sup_{k\in \N} \norm{Sv_0}_{L^\I(A_{\lambda^kr})} \leq C \om_1(\ga)+C \om_2(\ga)  + C e^{-c \ga^2 r^2}  r^2+ Ce^{-c r^2} + C r^{-1}.
\]
If $q\in (3,\I)$ we have
\begin{align*}
\norm{Sv_0}_{L^q(|x|\geq r)} &\leq C \norm{Sv_0}_{L^\I(|x|\geq r)}^{1-3/q} \norm{Sv_0}_{L^{3}_w}^{3/q} \\&\leq C (  C \om_1(\ga)+C \om_2(\ga) +   Ce^{-c \ga^2 { r}^2}  { r}^2+ C e^{-c { r}^2} + C r^{-1})^{1-3/q} \norm{v_0}_{L^{3}_w}^{3/q}
.\end{align*}

We now construct $\Theta(r)$. Let $\epsilon_k=2^{-k}$ for $k\in \N$. For each $\epsilon_k$, choose $\gamma_k>0$ sufficiently small so that 
\[
C \om_1(\gamma_k)+C \om_2(\gamma_k) \leq \frac  {\e_k^{q/(q-3)}} {2 C^{q/(q-3)}\| v_0 \|_{L^{3}_w}^{3/(q-3)}}.
\] 
Then choose $r_k$ sufficiently large so that $r_k>r_{k-1}$ and
\[ C e^{-c \gamma_k^2 r_k^2}  r_k^2 + Ce^{-c r_k^2} + C r_k^{-1}\leq \frac  {\e_k^{q/(q-3)}} {2C^{q/(q-3)}\| v_0 \|_{L^{3}_w}^{3/(q-3)}}.\]
Finally, let
\[
\Theta (r)=
\begin{cases}
1 &\text{if } 0<r<r_1
\\ \epsilon_k &\text{if } r_k\leq r< r_{k+1}
\end{cases},
\]
which completes our proof.
\end{proof}

\begin{corollary}\label{corrolary}Assume  $\Omega\in \{\R^3,\R^3_+  \}$.
Suppose  {$v_0 \in L^3_{w,\sigma}(\Omega)$ is  RDSS with factor $\la>1$ and phase $\phase \in \R$.} Then $\sup_{1\leq t\leq \la^2} Sv_0(t)\in L^q(\Omega)$ for every $q>3$.
\end{corollary}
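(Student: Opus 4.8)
\textbf{Proof proposal for Corollary \ref{corrolary}.}

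The plan is to combine the local bound from Lemma \ref{lemma:V0bound} with the tail decay from Lemma \ref{lemma:V0decay}, splitting $\Om$ into a fixed ball and its complement. Fix $q>3$. First I would pick $R_0>1$, say $R_0=2$, and write $\Om = (\Om\cap B_{R_0}) \cup (\Om\setminus B_{R_0})$. On the ball, Lemma \ref{lemma:V0bound} applied with $s=0$ and $m=0$ gives $\sup_{t\ge 1}\|Sv_0(t)\|_{L^\I(\Om\cap B_{R_0})}<\I$; since $\Om\cap B_{R_0}$ has finite Lebesgue measure, this immediately yields $\sup_{1\le t\le\la^2}\|Sv_0(t)\|_{L^q(\Om\cap B_{R_0})}\le |B_{R_0}|^{1/q}\sup_{t\ge 1}\|Sv_0(t)\|_{L^\I(\Om\cap B_{R_0})}<\I$.

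For the exterior region, Lemma \ref{lemma:V0decay} gives, for this same $q\in(3,\I)$, a function $\Theta$ with $\Theta(r)\to 0$ as $r\to\I$ such that $\sup_{1\le t\le\la^2}\|Sv_0(t)\|_{L^q(|x|>r)}\le\Theta(r)$ for every $r>0$; in particular with $r=R_0$ we get $\sup_{1\le t\le\la^2}\|Sv_0(t)\|_{L^q(\Om\setminus B_{R_0})}\le\Theta(R_0)<\I$. Adding the two contributions and taking the supremum over $t\in[1,\la^2]$ gives $\sup_{1\le t\le\la^2}\|Sv_0(t)\|_{L^q(\Om)}<\I$, i.e.\ $\sup_{1\le t\le\la^2}Sv_0(t)\in L^q(\Om)$. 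The case $q=\I$ (if one wishes to include it, though the statement says $q>3$) follows the same way, using the $L^\I$ tail bound of Lemma \ref{lemma:V0decay} directly on $\Om\setminus B_{R_0}$ together with the $L^\I$ bound on the ball.

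I do not expect any real obstacle here: the corollary is essentially a bookkeeping consequence of the two preceding lemmas, and the only point requiring a word of care is that the time interval $[1,\la^2]$ is compact and contained in $\{t\ge 1\}$, so both lemmas apply uniformly on it. One should state explicitly that the bounds obtained are uniform in $t\in[1,\la^2]$ so that "$\sup_{1\le t\le\la^2}Sv_0(t)$" — interpreted either as the pointwise supremum of $|Sv_0(\cdot,t)|$ over $t$, or simply as the assertion that each $Sv_0(t)$ lies in $L^q$ with a $t$-uniform bound — is genuinely in $L^q(\Om)$. If the intended reading is the pointwise-in-$x$ supremum $x\mapsto\sup_{1\le t\le\la^2}|Sv_0(x,t)|$, then one additionally notes that on $\Om\cap B_{R_0}$ this supremum is bounded by the $L^\I$ constant from Lemma \ref{lemma:V0bound}, while on $\Om\setminus B_{R_0}$ one covers $[1,\la^2]$ by finitely many (or uses continuity in $t$ of $Sv_0$, from the same Solonnikov estimates, to reduce to a dense countable set of times) and bounds the supremum by $\Theta$ evaluated along that net, still giving an $L^q$ function; either interpretation goes through with the same two ingredients.
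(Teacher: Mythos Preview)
Your proposal is correct and matches the paper's approach exactly: the paper's proof consists of the single sentence ``This follows immediately from Lemma \ref{lemma:V0bound} and Lemma \ref{lemma:V0decay},'' and your splitting into $\Om\cap B_{R_0}$ (handled by Lemma \ref{lemma:V0bound}) and $\Om\setminus B_{R_0}$ (handled by Lemma \ref{lemma:V0decay}) is precisely the intended reading. Your additional remarks on the interpretation of the supremum are more careful than the paper itself, but harmless.
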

\begin{proof}
This follows immediately from Lemma \ref{lemma:V0bound} and Lemma \ref{lemma:V0decay}.
\end{proof}

\section{EP-solutions to Navier-Stokes equations} %
\label{sec.proof}

In this section we prove the main Theorem \ref{thrm:existence}.

\begin{proof}[Proof of Theorem \ref{thrm:existence}] 

We will focus on the RDSS case (ii).The RSS case (i) can be obtained either as limit of case (ii) as indicated in the comment after Theorem 
 \ref{thrm:existence}, following \cite[\S 5.1]{BT1}, or proved directly in the time-independent setting, following \cite[\S 5.2]{BT1}.
 
Assume $v_0$ is an RDSS divergence free vector field in $L^3_{w,\sigma}(\Omega)$, $\Omega\in \{ \R^3,\R^3_+ \}$, with
phase $\phase \in \R$ and factor $\la>1$. Let $T =2 \log \la$ and 
choose $\al= \frac {\phase}T$ as in \eqref{alpha-choice}. 
Let $x,t,z,s$ satisfy \eqref{variables} and let $y=R_\th^T z$ and $\theta=\al s$.  Let  
\begin{equation}
\label{def:U0}
U_0(y,s) = R_\th ^T {\sqrt {t}}  Sv_0(x,t).
\end{equation}
Compare this formula to \eqref{similarity transform} and \eqref{V.u.eq}.
We now check that $U_0$ satisfies Assumption \ref{AU_0} with (any) $q \in (3,\I]$. Since $v_0$ is divergence free and RDSS, $Sv_0(x, t)$ is the divergence free, RDSS solution to the Stokes system \eqref{eq:Stokes} for $(x,t)\in \Om\times (0,\infty)$. It follows that $U_0(y,s)$ is  $T$-periodic, divergence free, and satisfies 
\eqref{U0.eq}. %
Lemmas \ref{lemma:V0bound} and \ref{lemma:V0decay} and Corollary \ref{corrolary} guarantee that $U_0$ satisfies the function space inclusions in Assumption \ref{AU_0}.

Let $u$ be the time-periodic weak solution of the Leray equations described in Theorem \ref{thrm.aux} with $U_0$ defined by \eqref{def:U0} and $q=10/3$.
Let $v(x,t)= R_\th u(y,s)/\sqrt{t}$.
Then $v$ satisfies \eqref{eq:NSEweak}, the weak form of the Navier-Stokes equations. 

It remains to show $v$ is an EP-solution.  Observe that
\begin{equation} \notag  v-Sv_0 \in L^\infty(1,\lambda^2;L^2(\Om))\cap L^2(1,\lambda^2;H^1(\Om)). \end{equation}
The $\lambda$-DSS scaling property implies
\begin{equation}\notag 
||v(t)-Sv_0(t)||_{L^2(\Om)}^2\lesssim t^{1/2} \sup_{1\leq \tau\leq \lambda^2} ||v(\tau)-Sv_0(\tau )||_{L^2(\Om)}^2,
\quad \forall t>0,
\end{equation}
and
\begin{equation}\notag
\int_0^{\lambda^2} \int || \nabla (v(t)-Sv_0(t))||_{L^2 }^2\,dx\,dt \lesssim \bigg( \sum_{k=0}^\infty  \lambda^{-k}  \bigg) \int_1^{\lambda^2}\int || \nabla (v(t)-Sv_0(t))||_{L^2 }^2\,dx\,dt.
\end{equation}
It follows that
\begin{equation}\label{ineq:time0}  v-Sv_0 \in L^\infty(0,\lambda^2;L^2(\Om))\cap L^2(0,\lambda^2;H^1(\Om)). \end{equation}
By re-scaling, these bounds hold up to any finite time. Thus, $v$ is an EP-solution and the proof is complete.
\end{proof}

\section{Appendix: Comments on the backward case}

In this section we summarize known results and open problems in the backward case. 
For more details please see \cite{JST}.

The classes of self-similar, RSS, DSS and RDSS solutions have their analog in the backward case, i.e., solutions of \eqref{eq:NSE} for 
\[
-\infty<t<0.
\]
In the backward case, one replaces 
$t$ by $-t$ in the similarity transform
\eqref{similarity transform}--\eqref{variables} to set
\[
v(x,t) = \frac 1{\sqrt{-t}} V(z,s), \quad \pi(x,t) = \frac 1{{-t}} \Pi(z,s), 
\quad
z=\frac x{\sqrt{-t}},\quad s=\log (-t),
\]
and changes the sign of $V+z\cdot \nb_z V$ in \eqref{eq:Leray} to get the \emph{backward Leray equations}
\EQ{\label{eq:backLeray}
 \partial_s V  + \frac 12 V + \frac z2\cdot\nabla_z V -\Delta_z V+V\cdot\nabla_z V+\nabla_z \Pi  = 0,
 \quad \nabla_z\cdot V = 0.
}

The class of backward self-similar solutions, with $V=V(z)$ independent of $s$, was proposed by Leray \cite{leray} in the whole space as a possible ansatz for singularity. Such a possibility was excluded by \cite{NRS} when one assumes $V \in L^3(\R^3)$. A key ingredient in the proof of \cite{NRS} is that the quantity  (a modified ``total head pressure'')
\EQ{
\Lambda = \frac 12 |V|^2 + \Pi + \frac 12 y\cdot V
}
satisfies the 1-sided maximal principle. The result of \cite{NRS} was extended in \cite{Tsai-ARMA} to exclude the cases $V \in L^q(\R^3)$, $3< q \le \infty$. 

The result of \cite{NRS} also follows from the later paper \cite{ESS} by Escauriaza, Seregin, and Sverak, which says that any solution
$v \in L^\infty(-1,0; L^3(\R^3))$ is necessarily regular up to time $0$.
This result is extended in \cite{Seregin} to the half space.
See \cite{Barker-Seregin} and its references for other extensions. 
This result of \cite{ESS} can be also used to exclude RSS/DSS/RDSS solutions in the class $v \in L^\infty(-1,0; L^3(\R^3))$, but it does not imply the result of \cite{Tsai-ARMA}.
The following remains open. 

\begin{problem} Suppose $v(x,t)$ is a backward RSS/DSS/RDSS solution of \eqref{eq:NSE} in $\R^3$ with 
\EQ{
\label{S5:eq3}
|v(x,t)| \le \frac C{|x|+\sqrt{-t}} \quad \text{in }\R^3 \times (-1,0),
}
does $v$ remain bounded up to time $0$?
\end{problem}

The special case of RSS solutions of Perelman can be formulated as follows. (We could misunderstand since we did not communicate with Perelman directly.)

\begin{problem} Suppose $v(x,t)$ is a solution of \eqref{eq:NSE}  in $\R^3$ for $t<0$ with 
\EQ{
v(x,t) =  \frac 1{\sqrt{-t}} R(\al s) u(R(-\al s)\frac x{\sqrt t}),\quad s=\log (-t),
}
for some $\al \not =0$,
and
\EQ{\label{S5:eq2}
|u(y)| \le \frac C{|y|+1} \quad \text{in }\R^3.
}
Are $v$ and $u$ identically zero?
\end{problem}
One can study $u(y)$ directly, which satisfies \eqref{S5:eq2} and
\EQ{\label{eq:Perelman}
\al  J u -\al J y\cdot \nb u
  &+  \frac 12 u +  \frac y2\cdot\nabla u -\Delta_y u +u\cdot\nabla u+\nabla p  = 0,
 \\ & \nabla \cdot u = 0,
}
in $\R^3$.
Note that, for the above system with $\al \not =0$, there does not seem an analogue quantity of $\Lambda$ that satisfies the maximal principle.

Regarding the half space, such solutions in the class
$v \in L^\infty(-1,0; L^3(\R^3_+))$
do not exist by \cite{Seregin}, and nothing is known under the assumption \eqref{S5:eq3}. We formulate a problem.
\begin{problem} Suppose $v(x,t)$ is a solution of \eqref{eq:NSE} in $\R^3_+ \times (-\infty,0)$ with zero boundary condition and
\EQ{
v(x,t) =  \frac 1{\sqrt{-t}}  u(\frac x{\sqrt t}),
}
with $u(y)$ satisfying \eqref{S5:eq2}. Are $v$ and $u$ identically zero?
\end{problem}

\section*{Acknowledgments}
{Part of this work was done when Z.B. was a postdoctoral fellow at the University of British Columbia.}
The research of
both authors was partially supported by the NSERC grant 261356-13 (Canada). That of Z.B. was also partially supported by the
NSERC grant 251124-12.

Zachary Bradshaw, Department of Mathematics, University of Virginia, Charlottesville, VA, 22904,  USA;
e-mail: zb8br@virginia.edu

\medskip

Tai-Peng Tsai, Department of Mathematics, University of British
Columbia, Vancouver, BC V6T 1Z2, Canada;
e-mail: ttsai@math.ubc.ca

\end{document}